\newlength{\tabwidth}
\newlength{\tabheight}
\newlength{\tabrule}
\newlength{\tabwidthx}
\newlength{\tabheightx}
\def\gentabbox#1#2#3#4{\vbox to \tabheight{\setlength{\tabrule}{#3}%
  \setlength{\tabwidthx}{#1\tabwidth}\addtolength{\tabwidthx}{\tabrule}%

\setlength{\tabheightx}{#2\tabheight}\addtolength{\tabheightx}{-\tabheight}%
  \hbox to #1\tabwidth{%
    \hspace{-0.5\tabrule}\rule{\tabrule}{#2\tabheight}\hspace{-\tabrule}%
    \vbox to #2\tabheight{\hsize=\tabwidthx%
      \vspace{-0.5\tabrule}\hrule width\tabwidthx height\tabrule%
      \vspace{-0.5\tabrule}\vfil%
      \hbox to \tabwidthx{\hss#4\hss}%
        \vfil\vspace{-0.5\tabrule}%
      \hrule width\tabwidthx height\tabrule\vspace{-0.5\tabrule}}%
    \hspace{-\tabrule}\rule{\tabrule}{#2\tabheight}\hspace{-0.5\tabrule}}%
  \vspace{-\tabheightx}}}
\def\genblankbox#1#2{\vbox to \tabheight{\vfil\hbox to
#1\tabwidth{\hfil}}}
\newcommand{\xx}{\mathbf x}
\newcommand{\yy}{\mathbf y}
\newcommand{\excise}[1]{}%{$\star$\textsc{#1}$\star$}
\newcommand{\field}{\mathbb}
\newcommand{\liealgebra}{\mathfrak}
\newcommand{\la}{\liealgebra}
\newcommand{\C}{{\field C}}
\newcommand{\N}{{\mathbb N}}
\renewcommand{\b}{\liealgebra b}
\newcommand{\n}{{\la n}}
\newcommand{\ga}{\alpha}
\newcommand{\wt}{\widetilde}
\newtheorem{prop}{Proposition}[section]
\newtheorem{theorem}[prop]{Theorem}
\newtheorem{fact}{Fact}
\newtheorem{conjecture}[prop]{Conjecture}
\theoremstyle{definition}
\newtheorem{remark}[prop]{Remark}
\numberwithin{subcase}{case}
\numberwithin{subsubcase}{subcase}
\numberwithin{subsubsubcase}{subsubcase}
\newtheorem{definition}[prop]{Definition}
\newcommand{\frs}{\mathfrak{s}}
\newcommand{\frt}{\mathfrak{t}}
\begin{document}
\title[K-orbit closures on G/B as degeneracy loci]{K-orbit closures on G/B as universal degeneracy loci for flagged vector bundles splitting as direct sums}

\author{Benjamin J. Wyser}
\date{\today}

%\thanks{}
%\address{}
%\email{}

\begin{abstract}
We use equivariant localization and divided difference operators to determine formulas for the torus-equivariant
fundamental cohomology classes of $K$-orbit closures on the flag variety $G/B$ for various symmetric pairs $(G,K)$.
We describe an interpretation of these formulas as representing the classes of particular types of degeneracy loci
when evaluated at certain Chern classes.  For the type $A$ pair $(SL(p+q,\C),S(GL(p,\C) \times GL(q,\C)))$, such
degeneracy loci are described explicitly, relative to a rank $p+q$ vector bundle $V$ on a smooth complex variety $X$
equipped with a flag of subbundles and a splitting of $V$ as a direct sum of subbundles of ranks $p$ and $q$.
We conjecture similarly explicit descriptions of the degeneracy loci for all cases in types $B$ and $C$.
\end{abstract}

\maketitle

Suppose that $G$ is a complex reductive group of classical type, and that $K=G^{\theta}$ is the subgroup fixed by an involution $\theta$ of $G$.  $K$ is referred to as a \textit{symmetric subgroup}.  $K$ acts on the flag variety $G/B$ with finitely many orbits \cite{Matsuki-79}, and the geometry of these orbits and their closures plays an important role in the theory of Harish-Chandra modules for a certain real form of the group $G$.  For this reason, the geometry of $K$-orbits and their closures have been studied extensively, primarily in representation-theoretic contexts.

In \cite{Wyser-13-TG}, the $K$-orbit closures for the symmetric pairs $(G,K)=(GL(n,\C),O(n,\C))$, 
$(SL(n,\C),SO(n,\C))$, and $(SL(2n,\C),Sp(2n,\C))$ were studied from the perspective of torus-equivariant
geometry.  In the current paper, we carry out a similar program of study for the remaining symmetric
pairs $(G,K)$ with $G$ a classical simple group, up to finite covers.  The specific pairs that we study can be found
in Table \ref{tab:all-pairs}.

The study of \cite{Wyser-13-TG}, as well as that of this paper, was motivated by earlier work of W. Fulton
\cite{Fulton-92,Fulton-96_1,Fulton-96_2} which realized Schubert varieties as universal degeneracy loci for maps of
flagged vector bundles, and by connections between that work and the equivariant cohomology of the flag variety,
elucidated by W. Graham in \cite{Graham-97}.  $K$-orbit closures are, in a sense, generalizations of Schubert
varieties, and so it is natural to try to fit these more general objects into a framework similar to that described
in the aforementioned works.

To this end, in \cite{Wyser-13-TG}, the following program is carried out for the pairs $(GL(n,\C),O(n,\C))$, $(SL(n,\C), SO(n,\C))$ and $(SL(2n,\C),Sp(2n,\C))$:
\begin{enumerate}
	\item Determine formulas for the $S$-equivariant cohomology classes of the closed $K$-orbits using equivariant localization, together with the self-intersection formula.  (Here, $S$ is a maximal torus of $K$ contained in a $\theta$-stable maximal torus $T$ of $G$.)
	\item Using such formulas as a starting point, describe the weak order on $K \backslash G/B$ combinatorially, and outline how divided difference calculations can give formulas for the equivariant classes of the remaining orbit closures.
	\item Describe the $K$-orbit closures explicitly as sets of flags, and using this description,
realize the $K$-orbit closures as universal degeneracy loci of a certain type, involving a vector bundle over a smooth
complex variety $X$ equipped with a single flag of subbundles and a certain additional structure depending on $K$.
\end{enumerate}

In the present paper, we carry out this program for the remaining symmetric pairs $(G,K)$ of Table \ref{tab:all-pairs}.  Step (1) is
carried out completely for all pairs $(G,K)$ listed above in the form of Theorem \ref{thm:formulas}, the main result of this
paper.  For step (2), there is little to do, as combinatorial models for $K \backslash G/B$, as well as their weak
orders, are already understood \cite{Matsuki-Oshima-90}.  Indeed, the only issue for us not addressed by \emph{loc. cit.}
is distinguishing between solid and dashed edges in our
weak order Hasse diagrams.  This
concerns the question of whether or not to divide by $2$ when performing a divided difference calculation.  This
matter has been fully addressed in existing literature in some cases, whereas the remaining cases are all addressed in 
\cite{Wyser-Thesis}.  See Section \ref{sec:solid-dashed} for details.

As for step (3), in the cases considered in \cite{Wyser-13-TG}, the ``additional structure" possessed by the vector
bundle is a non-degenerate symmetric or skew-symmetric bilinear form taking values in the trivial bundle.  The
associated degeneracy loci are defined by imposing conditions on the rank of the form when it is restricted to the
fibers of various components of the flag.  These conditions come from explicit knowledge of the $K$-orbit
closures as sets of flags in each case.

In this paper, the relevant additional structure is a splitting of the vector bundle as a direct
sum of two subbundles.  Thus the degeneracy loci corresponding to the $K$-orbit closures in these cases can all be
described roughly as follows:  We are given a complex vector bundle $V$ over a smooth complex variety $X$, a single
flag $F_{\bullet}$ of subbundles of $V$, and a splitting of $V$ as a direct sum $V' \oplus V''$ of subbundles.
(In types $BCD$, the bundle $V$ is equipped with a symmetric or skew-symmetric bilinear form, the flag $F_{\bullet}$
is isotropic/Lagrangian with respect to the form, and the summands $V'$ and $V''$ are required to satisfy further
properties with respect to the form.)  The degeneracy loci are then defined by imposing conditions on the relative
position of the fibers of the flag and the two summands.  These conditions are encoded in ``clans" (which are
character strings consisting of $+$'s, $-$'s, and natural numbers subject to some further conditions) which
parametrize the $K$-orbit closures.  Their precise nature depends again upon explicit set-theoretic descriptions of
the orbit closures, which have not appeared in the literature in any of our cases.  We give such an explicit
description for case (1) of Table \ref{tab:all-pairs} as Theorem \ref{thm:orbit-closures}.

We do not know of similarly explicit descriptions of orbit closures for the remaining cases.  This leaves item (3)
partially unaddressed.  However, as the $K$-orbits in the remaining cases are closely related to those in case (1)
(in a sense made precise in Section \ref{sec:other-cases}), Theorem \ref{thm:orbit-closures} does at least suggest a naive guess at such
a description, which we conjecture is correct in types $BC$, i.e. cases (2)-(4) of Table \ref{tab:all-pairs} (cf. Conjecture
\ref{conj:bruhat-order-other-types}).  Alas, this naive guess is incorrect for all three type $D$ pairs, as we note in
Fact \ref{fact:type-d-conj-false}.

The paper is organized as follows:  In Section~1, we recall various preliminary facts on $K$-orbits and weak order, as well as the known combinatorial models for the various orbit sets.  In Section~2, we start by reviewing some basic facts
on equivariant cohomology and the localization theorem, then use these facts to prove formulas for the classes of
closed $K$-orbits in the various cases.  These are summarized in Theorem \ref{thm:formulas}, the main result of the
paper.  Finally, in Section~3, we connect these formulas to Chern class formulas for degeneracy loci of the type
loosely described above.

A number of the results presented herein were part of the author's PhD thesis, written at the University of Georgia
under the direction of his research advisor, William A. Graham.  The author thanks Professor Graham wholeheartedly
for his help in conceiving that project, as well as for his great generosity with his time and expertise throughout.
The author also thanks Michel Brion for helpful remarks and advice, and an anonymous referee for many helpful suggestions
which greatly improved the exposition of an earlier version of the papers.

\section{Preliminaries}
\subsection{Notation}\label{sec:notation}
We denote by $I_n$ the $n \times n$ identity matrix, and by $J_n$ the $n \times n$ matrix with $1$'s on the
antidiagonal and $0$'s elsewhere, i.e. the matrix $(e_{i,j}) = \delta_{i,n+1-j}$.  If $n = p+q$, then $I_{p,q}$ will
denote the $n \times n$ diagonal matrix having $p$ $1$'s followed by $q$ $-1$'s on the diagonal.  If $a+b+c=n$, then
$I_{a,b,c}$ will denote the $n \times n$ diagonal matrix having $a$ $1$'s, followed by $b$ $-1$'s, followed by $c$
$1$'s, on the diagonal.  $J_{n,n}$ shall denote the block matrix which has $J_n$ in the upper-right block, $-J_n$ in
the lower-left block, and $0$'s elsewhere.  That is,
\[  J_{n,n} := 
\begin{pmatrix}
0 & J_n \\
-J_n & 0 \end{pmatrix}. \]

Ordinary permutations will typically be written in one-line notation, i.e. the permutation in $S_4$ which sends $1$ to
$2$, $2$ to $3$, $3$ to $1$, and $4$ to $4$, will be indicated by $2314$.  We will deal also with signed permutations,
which will typically be written in one-line notation with bars over some of the numbers to indicate where the 
permutation takes negative values.  Thus the signed permutation of $\{1,\hdots,4\}$ which sends $1$ to $2$,
$2$ to $-3$, $3$ to $1$, and $4$ to $-4$ will be indicated by $2 \overline{3} 1 \overline{4}$.

Unless stated otherwise, $H^*(-)$ (resp. $H_S^*(-)$) shall always mean ($S$-equivariant) cohomology with $\C$-coefficients.

We denote the Lie algebra of an algebraic group by its corresponding Gothic letter, so $\text{Lie}(S)$ is denoted by
$\frs$, $\text{Lie}(T)$ by $\frt$, and so on.  If $S \subseteq T$ is an inclusion of algebraic tori, then dual to the inclusion of $\frs \subseteq \frt$ there is a restriction map
$\frt^* \rightarrow \frs^*$, which we denote by $\rho$.
Coordinate functions on $\frs$ will be denoted by capital $Y$ variables, while those on $\frt$ will be denoted by
capital $X$ variables.

$K \backslash G/B$ should always be taken to mean the set of $K$-orbits on
$G/B$, unless explicitly stated otherwise.  (This is as opposed to $B$-orbits on $K \backslash G$, or $B \times K$-orbits
on $G$.)

If $Y$ is a $K$-orbit closure, and $\ga$ a simple root, denote by $s_{\ga} \cdot Y$ the $K$-orbit closure
$\pi_{\ga}^{-1}(\pi_{\ga}(Y))$, where $\pi_{\ga}: G/B \rightarrow G/P_{\ga}$ is the standard projection.
($P_{\ga}$ denotes the standard parabolic of type $\ga$, i.e. $P_{\ga} = B \cup Bs_{\ga}B$.)  Since $\pi_{\ga}$ is a 
$\mathbb{P}^1$-bundle, $s_{\ga} \cdot Y$ is either equal to $Y$, or it is another $K$-orbit closure of dimension
$\dim(Y)+1$.

Given $w \in W$ with $\ell(w) = l$, let $w=s_{\ga_1} \hdots s_{\ga_l}$ be a reduced decomposition of $w$.  Then define
\[ w \cdot Y = s_{\ga_1} \cdot (s_{\ga_2} \cdot \hdots \cdot (s_{\ga_l} \cdot Y) \hdots ). \]
The resulting $K$-orbit closure is independent of the choice of reduced expression for $w$ \cite{Richardson-Springer-90}.

\begin{definition}\label{def:weak-order}
The \textbf{weak order} on the set of $K$-orbit closures is defined by $Y \leq Y'$ if and only if
$Y' = w \cdot Y$ for some $w \in W$.
\end{definition}

In our Hasse diagrams depicting the weak order, if $Y' = s_{\ga} \cdot Y$, then $Y$ is connected to $Y'$ by a solid
edge labelled by $\ga$ if $\pi_{\ga}|_Y: Y \rightarrow \pi_{\ga}(Y)$ is birational.  We use a dashed edge if instead this map is $2$-to-$1$.  These are the only two possibilities.

\subsection{The specific cases}\label{sec:specific-cases}
With notation as defined in Section \ref{sec:notation}, we describe the particular realizations of the various
symmetric pairs $(G,K)$ that we have in mind.  We start with the lone type $A$ case $(SL(n,\C),S(GL(p,\C) \times GL(q,\C)))$
($n=p+q$).  In some sense, this case is the most important, as our understanding of the combinatorics of the remaining
cases depends heavily upon our understanding of this one.  We define $K$ to be $SL(n,\C)^{\theta}$, where $\theta$
is the involution $\text{int}(I_{p,q})$, and where $\text{int}(g)$ denotes the inner automorphism ``conjugation by $g$''.
Then $K$ is embedded in $G$ in the obvious way:
\[ K = \left\{
\left[
\begin{array}{cc}
K_{11} & 0 \\
0 & K_{22} \end{array}
\right] \in SL(p+q,\C)
\ \middle\vert \ 
\begin{array}{c}
K_{11} \in GL(p,\C) \\
K_{22} \in GL(q,\C) \end{array}
\right\} \cong S(GL(p,\C) \times GL(q,\C)). \]

To describe our realizations of the remaining pairs, we first specify a realization of the ambient group $G$ in each case.

For $G=SO(2n+1,\C)$ of type $B$, we let $G$ be the isometry group of the symmetric form defined by
$\left\langle e_i,e_j \right\rangle = \delta_{i,2n+1-j}$.  

In matrix terms,
\[ G = \{M \in SL(2n+1,\C) \mid MJ_{2n+1}M^t = J_{2n+1}\}. \]

For $G=Sp(2n,\C)$ of type $C$, 
\[ G = \{M \in SL(2n,\C) \mid MJ_{n,n}M^t = J_{n,n}\}. \]

Finally, for $G=SO(2n,\C)$ of type $D$, in cases (5) and (6) we use the following realization of $G$:
\[ G = \{M \in SL(2n,\C) \mid MJ_{2n}M^t = J_{2n}\}. \]

In case (7), on the other hand, we use a different (perhaps more typical) realization of $SO(2n,\C)$:
\[ G = \{M \in SL(2n,\C) \mid MM^t = I_{2n}\}. \]

We specify the various groups $K$ by describing their involutions in the following table.  In all cases, $n=p+q$.

\begin{table}[h]
\begin{tabular}{|l|l|l|}
		\hline
		Case number & $(G,K)$ & Involution $\theta$ \\ \hline
		(1) &  $(SL(n,\C),S(GL(p,\C) \times GL(q,\C))$ & $\text{int}(I_{p,q})$ \\ \hline
		(2) & $(SO(2n+1,\C),S(O(2p,\C) \times O(2q+1,\C)))$ & $\text{int}(I_{p,2q+1,p})$ \\ \hline
		(3) & $(Sp(2n,\C),Sp(2p,\C) \times Sp(2q,\C))$ & $\text{int}(I_{p,2q,p})$ \\ \hline
		(4) & $(Sp(2n,\C),GL(n,\C))$ & $\text{int}(iI_{n,n})$ \\ \hline
		(5) & $(SO(2n,\C),S(O(2p,\C) \times O(2q,\C)))$ & $\text{int}(I_{p,2q,p})$ \\ \hline
		(6) & $(SO(2n,\C),GL(n,\C))$ & $\text{int}(iI_{n,n})$ \\ \hline
		(7) & $(SO(2n,\C),S(O(2p+1,\C) \times O(2q-1,\C)))$ & $g \mapsto I_{2p+1,2q-1}gI_{2p+1,2q-1}$ \\
		\hline
\end{tabular}
\caption{Symmetric pairs and corresponding involutions}\label{tab:all-pairs}
\end{table}

In all cases but (7), $\theta$ is an inner involution; in case (7), it is not.  Note also that in each of  (2)-(7), it is the case that
$K=G \cap K'$, where $K'$ is $S(GL(p,\C) \times GL(q,\C))$ for some $p$ and $q$ (perhaps twisted by an outer
automorphism of $SL(p+q,\C)$).  As examples, in case (2), $K' = S(GL(2p,\C) \times GL(2q+1,\C))$, while in case (4),
$K'=S(GL(n,\C) \times GL(n,\C))$.

\subsubsection{Choices of maximal tori, Borel subgroups, and root data}
We now make explicit choices of maximal tori and Borel subgroups of both $G$ and $K$, and describe the corresponding
embeddings of root systems and Weyl groups.  We always choose $T \subseteq B$ to be a $\theta$-stable maximal torus of
$G$ contained in a $\theta$-stable Borel subgroup of $G$.  It is known abstractly that such pairs $T \subseteq B$
always exist, cf.  \cite{Richardson-Springer-90} and references therein.  In cases (1)-(6), we choose
$T$ to be the diagonal elements of $G$, and $B$ to be the lower triangular elements of $G$.  We take this choice of
$B$ because we want $B$ to correspond to the negative roots $-\Phi^+$, with $\Phi^+ \subseteq \Phi(G,T)$ one of the following
``standard'' positive root systems:
\begin{itemize}
 \item Type $A$:  $\{X_i - X_j \mid i < j\}$;
 \item Type $B$:  $\{X_i \pm X_j \mid i < j\} \cup \{X_i \mid i = 1,\hdots,n\}$;
 \item Type $C$:  $\{X_i \pm X_j \mid i < j\} \cup \{2X_i \mid i = 1,\hdots,n\}$;
 \item Type $D$:  $\{X_i \pm X_j \mid i < j\}$.
\end{itemize}

In case (7), the torus $T$ we consider is the one such that $\frt$ consists of matrices
of the following form:
\[
	\begin{pmatrix}
		\begin{tabular}[t]{c|c|}
			$0$ & $a_1$ \\ \hline
			$-a_1$ & $0$ \\ \hline
		\end{tabular}
		& & & \multirow{2}{*}{\Large 0} \\
		& 
		\begin{tabular}[b]{|c|c|}
			\hline
			$0$ & $a_2$ \\ \hline
			$-a_2$ & $0$ \\ \hline
		\end{tabular}
		& & \\
		\multirow{2}{*}{\Large 0} & & \ddots & 
		\\
		& & & 
		\begin{tabular}{|c|c}
			\hline
			$0$ & $a_n$ \\ \hline
			$-a_n$ & $0$
		\end{tabular}
	\end{pmatrix}
\]
$B$ is the Borel which contains $T$ and which corresponds to the roots $-\Phi^+$, where $\Phi^+$ is again taken to be
the standard positive system defined above.

In all cases, our choices of $B$ and $T$ are such that $B_K = B \cap K$ is a Borel subgroup of $K$, and such that
$S = T \cap K$ is a maximal torus of $K$.  In cases (1)-(6), we actually have $S=T$.  Even so, we will denote the
torus of $K$ by $S$, and coordinate functions on $\frs$ by variables $Y_i$, with coordinate functions on $\frt$
denoted by variables $X_i$.  In cases (1)-(6), the restriction map $\rho$ mentioned in Section \ref{sec:notation} is
simply given by $X_i \mapsto Y_i$, and we often omit it from the notation.

In case (7), $\text{rank}(K) = n-1$, and $S$ is the subtorus of $T$ such that $\frs$ consists of matrices of the form
\[
	\begin{pmatrix}
		\begin{tabular}{c|c|}
			$0$ & $a_1$ \\ \hline
			$-a_1$ & $0$ \\ \hline
		\end{tabular}
		& & & & & & \\
		& \ddots & & & & \multirow{2}{*}{\Huge 0} & \\
		& & 
		\begin{tabular}{|c|c|}
			\hline
			$0$ & $a_p$ \\ \hline
			$-a_p$ & $0$ \\ \hline
		\end{tabular}
		& & & & \\
		& & & 
		\begin{tabular}{|c|c|}
			\hline
			$0$ & $0$ \\ \hline
			$0$ & $0$ \\ \hline
		\end{tabular}
		& & & \\
		& & & &
		\begin{tabular}{|c|c|}
			\hline
			$0$ & $a_{p+2}$ \\ \hline
			$-a_{p+2}$ & $0$ \\ \hline
		\end{tabular}
		& & \\
		& \multirow{2}{*}{\Huge 0} & & & & \ddots & \\
		& & & & & &
		\begin{tabular}{|c|c}
			\hline
			$0$ & $a_n$ \\ \hline
			$-a_n$ & $0$
		\end{tabular}
	\end{pmatrix}
\]

For this case, the map $\rho$ is given by $X_i \mapsto Y_i$ for $i \neq p+1$, and $X_{p+1} \mapsto 0$.

With all these choices made, the root system of $K$ in each case is given in Table \ref{tab:roots-of-k}, which appears
as part of the proof of Theorem \ref{thm:formulas}, for ease of reference by those who would carefully read
that proof.

\subsubsection{Weyl groups}
Finally, we also describe the Weyl group $W$ for $G$ in each of our cases, as well as the embedding of $W_K$, the Weyl
group for $K$, into $W$.  Note that since $S=T$ in cases (1)-(6), it is obvious that the map $W_K := N_K(S)/S
\rightarrow N_G(T)/T =: W$ is an embedding $W_K \subseteq W$.  In cases such as (7), where $S \subsetneq T$, this is
not entirely obvious, but is still true, as explained in \cite[Section 1]{Wyser-13-TG}.

In type $A$, $W \cong S_n$, as usual.  In types $B$ and $C$, $W$ is the group of signed permutations of
$\{1,\hdots,n\}$.  These are bijections $\sigma$ on $\{\pm 1,\hdots,\pm n\}$ such that $\sigma(-i) = -\sigma(i)$ for
all $i$.  In type $D$, $W$ is the group of all signed permutations on $\{1,\hdots,n\}$ which change an even number of
signs.  In each case, $W$ acts on the coordinate functions $X_i$ by permutation of the indices together with sign
changes.

In the various cases, $W_K$ is embedded as follows:
\begin{enumerate}
	\item $W_K \cong S_p \times S_q$, permutations which act separately on the sets $\{1,\hdots,p\}$ and $\{p+1,\hdots,n\}$;
	\item $W_K$ consists of signed permutations which act separately on the sets $\{\pm 1,\hdots,\pm p\}$ and $\{ \pm (p+1),\hdots,\pm n\}$;
	\item Same as case (2);
	\item $W_K \cong S_n$, embedded in $W$ as the signed permutations of $\{\pm 1,\hdots,\pm n\}$ which change no signs.
	\item Same as cases (2) and (3);
	\item Same as case (4);
	\item $W_K$ consists of signed permutations of $\{1,\hdots,n\}$ which act separately on $\{\pm 1,\hdots, \pm p\}$ and $\{\pm (p+2),\hdots,\pm n\}$, changing any number of signs on each set,
and which either fix $p+1$ or send it to its negative, whichever causes the 
resulting signed permutation to have an even number of sign changes.
\end{enumerate}

Note that in cases (2), (5), and (7), $K$ is disconnected.  In each case, we have described $W_K = N_K(S)/S$, which is
\textit{a priori} different from the Weyl group for the Lie algebra of $K$.  The latter is $W_{K^0}$, where $K^0$ is
the identity component of $K$.  In cases (2) and (5), $W_{K^0}$ is an index $2$ subgroup of $W_K$.  In case (2), it
consists of those elements of $W_K$ which change an even number of signs on $\{\pm 1,\hdots,\pm p\}$.  In case (5), it
consists of those elements of $W_K$ which change an even number of signs on both the sets $\{\pm 1,\hdots,\pm p\}$ and
$\{\pm (p+1),\hdots,\pm n\}$.  By contrast, in case (7) we actually have $W_K = W_{K^0}$.

\subsection{Parametrization of $K$-orbits by Clans}\label{sec:param-clans}
Combinatorial models for $K \backslash G/B$, as well as their weak orders, are described for all of the cases of this paper in \cite{Matsuki-Oshima-90}, with some of the cases being treated in more detail in \cite{Yamamoto-97}.  We recall these known parametrizations.

\subsubsection{$(G,K) = (SL(p+q,\C),S(GL(p,\C) \times GL(q,\C)))$}\label{sec:type-a-case}
As mentioned above, the most important pair we consider is $(G,K)=(SL(p+q,\C),S(GL(p,\C) \times GL(q,\C)))$, since the combinatorics of all of
the other cases are derived from this one, in a sense made precise in the next section.  Thus we recall the known results of this case first.
Appropriate references are \cite{Matsuki-Oshima-90,Yamamoto-97}.

For this pair, the $K$-orbits are parametrized by what are called ``clans"  or, if we wish to emphasize the role of
$p$ and $q$, ``$(p,q)$-clans".
  
\begin{definition}
A \textbf{$(p,q)$-clan} is an involution (i.e. an element of order $2$) in $S_{p+q}$ with each fixed point decorated by either a $+$ or a $-$
sign, in such a way that the number of $+$ fixed points minus the number of $-$ fixed points is $p-q$.  (If
$p < q$, then there should be $q-p$ more $-$ signs than $+$ signs.)

A clan is depicted by a string $c_1 \hdots c_n$ of $p+q$ characters.  Where the underlying involution fixes
$i$, $c_i$ is either a $+$ or a $-$, as appropriate.  Where the involution interchanges $i$ and $j$, 
$c_i=c_j\in \N$ is a matching pair of natural numbers.  (A different natural number is used for each 
pair of indices exchanged by the involution, and these character strings are considered equivalent up to permutation of the natural numbers; e.g. $1122$ and $2211$ are the same, since they encode the same involution $2143$.)
\end{definition}

As an example, suppose $n = 4$, and $p = q = 2$.  Then we must consider all clans of length $4$ where the number of $+$'s and the number of $-$'s is the same (since $p-q = 0$).  There are $21$ of these, and they are as follows:  
	\[ ++--; +-+-; +--+; -++-; -+-+; --++; \]
	\[ 11+-; 11-+; 1+1-; 1-1+; 1+-1; 1-+1; \]
	\[ +11-; -11+; +1-1; -1+1; +-11; -+11; \]
	\[ 1122; 1212; 1221 \]

We give a combinatorial description of the weak order (cf. Definition \ref{def:weak-order}) in this case, relative to this parametrization by clans.  The minimal orbits are those whose indexing clans consist only of signs.  From there, we need only define $s_i \cdot \gamma$, where
$s_i$ is the simple transposition in $S_n$ which interchanges $i$ and $i+1$, and where $\gamma$ is an arbitrary
$(p,q)$-clan.

If $\gamma=c_1 \hdots c_n$, then $s_i \cdot \gamma \neq \gamma$ if and only if one of the following holds:
\begin{enumerate}
	\item $c_i$ and $c_{i+1}$ are unequal natural numbers, and the mate of $c_i$ is to the left of the mate of $c_{i+1}$;
	\item $c_i$ is a sign, $c_{i+1}$ is a natural number, and the mate of $c_{i+1}$ is to the right of $c_{i+1}$;
	\item $c_i$ is a natural number, $c_{i+1}$ is a sign, and the mate of $c_i$ is to the left of $c_i$; or
	\item $c_i$ and $c_{i+1}$ are opposite signs.
\end{enumerate}

For example, taking $p = 3, q = 2$, and letting $i=2$, $112+2$ satisfies the first condition; $+-11+$ satisfies the second; $11+-+$ satisfies the third; and $+-++-$ satisfies the fourth.  Note that $1+122$ satisfies none of the conditions, since the mate of the $1$ in the 3rd slot occurs to its \textit{left}, rather than to its right.

If $\gamma' = s_i \cdot \gamma \neq \gamma$, then in the first three cases, $\gamma'$ is obtained from $\gamma$ by interchanging $c_i$ and $c_{i+1}$.  In the fourth case, $\gamma'$ is obtained from $\gamma$ by replacing the opposite signs $c_i$ and $c_{i+1}$ by a pair of equal natural numbers.  So, for the examples above, we have
\begin{itemize}
	\item $s_2 \cdot 112+2 = 121+2$;
	\item $s_2 \cdot +-11+ = +1-1+$;
	\item $s_2 \cdot 11+-+ = 1+1-+$;
	\item $s_2 \cdot +-++- = +11+-$.
\end{itemize}

On the other hand, $s_2 \cdot 1+122 = 1+122$.

\subsubsection{Other pairs}\label{sec:other-cases}
As noted in Section \ref{sec:specific-cases}, for the other symmetric pairs $(G,K)$ considered in this paper (those in types $BCD$), $K$ can be
realized as $G \cap K'$ with $K' \cong S(GL(p,\C) \times GL(q,\C)) \subseteq G' = SL(p+q,\C)$ for some $p$ and $q$.  The
flag variety $X$ for $G$ naturally embeds in the flag variety $X'$ for $G'$, and so the intersection of a $K'$-orbit
on $X'$ with $X$, if non-empty, is stable under $K$ and hence is \textit{a priori} a union of $K$-orbits.

In general, such an intersection need not be a single $K$-orbit.  It could in principle be a union of
multiple $K$-orbits, and indeed this can happen.  Whether it happens depends upon the chosen
representative of the isogeny class of $G$, which in turn can affect the connectedness of $K$.  It turns out that in
each case we consider, it is possible to choose $G$ (and the corresponding $K$) such that the
intersection of a $K'$-orbit on $X'$ with $X$ is always a \textit{single} $K$-orbit.  These particular choices of
$(G,K)$ are the ones appearing in Table \ref{tab:all-pairs}.

The upshot is that in each of our cases outside of type $A$, the set of $K$-orbits can be parametrized by a
subset of the $(p,q)$-clans (for the appropriate $p,q$) possessing some additional combinatorial properties which
amount to the corresponding $K'$-orbit on $X'$ meeting the smaller flag variety $X$
non-trivially.  These combinatorial properties always involve one of the following two symmetry conditions.

\begin{definition}\label{def:symmetric-clan}
We say that $\gamma=c_1 \hdots c_n$ is \textbf{symmetric} if the clan $c_n \hdots c_1$ obtained from $\gamma$ by
reversing its characters is equal to $\gamma$ as a clan.  Explicitly, we require
\begin{enumerate}
	\item If $c_i$ is a sign, then $c_{n+1-i}$ is the same sign.
	\item If $c_i$ is a number, then $c_{n+1-i}$ is also a number, and if $c_{n+1-i} = c_j$, then $c_{n+1-j} = c_i$.
\end{enumerate}
\end{definition}

\begin{definition}\label{def:skew-symmetric-clan}
We say that $\gamma=c_1 \hdots c_n$ is \textbf{skew-symmetric} if the clan $c_n \hdots c_1$ is the ``negative" of
$\gamma$, meaning it is the same clan, except with all signs changed.  Specifically,
\begin{enumerate}
	\item If $c_i$ is a sign, then $c_{n+1-i}$ is the opposite sign.
	\item If $c_i$ is a number, then $c_{n+1-i}$ is also a number, and if $c_{n+1-i} = c_j$, then $c_{n+1-j} = c_i$.
\end{enumerate}
\end{definition}

Note that condition (2) of each of the above definitions allows for the possibility that $c_i = c_{n+1-i}$.  However,
this is not necessary for a clan to be symmetric or skew-symmetric.  Indeed, the $(2,2)$-clan $1212$ is symmetric
(and also skew-symmetric), since its reverse $2121$ is the same clan, but there are no matching natural numbers in
positions $(i,n+1-i)$ for any $i$.

The $K$-orbits in our remaining examples are parametrized as follows.  Note that the numbering here starts from (2)
in order to make this list correspond to that given in Table \ref{tab:all-pairs}, as well as in the statement of Theorem
\ref{thm:formulas}, in the form of Table \ref{tab:formulas}.

\begin{enumerate}
\setcounter{enumi}{1}
	\item $(SO(2n+1,\C),S(O(2p,\C) \times O(2q+1,\C)))$:  Symmetric $(2p,2q+1)$-clans;
	\item $(Sp(2n,\C),Sp(2p,\C) \times Sp(2q,\C))$:  Symmetric $(2p,2q)$-clans $\gamma=c_1 \hdots c_{2n}$ such that $c_i \neq c_{2n+1-i}$ whenever $c_i \in \N$;
	\item $(Sp(2n,\C),GL(n,\C))$:  Skew-symmetric $(n,n)$-clans;
	\item $(SO(2n,\C), S(O(2p,\C) \times O(2q,\C)))$:  Symmetric $(2p,2q)$-clans;
	\item $(SO(2n,\C), GL(n,\C))$:  Skew-symmetric $(n,n)$-clans $\gamma=c_1 \hdots c_{2n}$ such that $c_i \neq c_{2n+1-i}$ whenever $c_i \in \N$, and such that among $c_1 \hdots c_n$, the total number of $-$ signs and pairs of equal natural numbers is even;
	\item $(SO(2n,\C), S(O(2p+1,\C) \times O(2q-1,\C)))$:  Symmetric $(2p+1,2q-1)$-clans.
\end{enumerate}

These parametrizations, along with a description of the corresponding weak orders, appear in \cite{Matsuki-Oshima-90}.
We remark that no proofs of the correctness of the parametrizations are given in \emph{loc. cit.}.  Proofs are
given in \cite{Yamamoto-97} for two of the above cases (as well as for the pair $(SL(p+q,\C),S(GL(p,\C) \times GL(q,\C)))$.
Proofs of the correctness of all of these parametrizations appear in \cite[Appendix A]{Wyser-Thesis}.

\subsection{Recursive computation starting from closed orbits}\label{ssec:other_orbits}
We recall our general method for computing representatives, reminding the reader of the geometric justification for it.
The idea is to start from explicit representatives of the classes of closed $K$-orbits, and determine formulas for
classes of the remaining $K$-orbit closures using divided difference operators.  We quickly review how this works,
referring the reader to \cite[Section 1.4]{Wyser-13-TG} for more details.

Suppose that $Y$ and $Y'$ are two $K$-orbit closures with $Y' = s_{\ga} \cdot Y$.  Then in the Hasse diagram, $Y$ is connected to $Y'$ by an edge labelled $\ga$.  This edge is either solid or dashed, depending on whether $\pi_{\ga}|Y$ has degree $1$ or $2$, respectively.  If it is solid, we have
\[ [Y'] = \partial_{\ga}[Y], \]
both in ordinary and $S$-equivariant cohomology.  If it is dashed, we have
\[ [Y'] = \frac{1}{2} \partial_{\ga}[Y]. \]
Here $\partial_{\ga}$ denotes the \textbf{divided difference operator} on $H_S^*(X)$
corresponding to $\ga$, defined by
\[ \partial_{\ga}(f) = \frac{f - s_{\ga}(f)}{\ga}. \]

Given the above, the following result \cite[Theorem 4.6]{Richardson-Springer-90} says that if we have explicit formulas
for the classes of closed orbits, then in principle we can compute formulas for the classes of all other orbit
closures via successive divided difference calculations.

\begin{theorem}
  Let $Y$ be a $K$-orbit closure on $G/B$.  There exists a closed orbit $Y_0$ and some $w \in W$ such that
$w \cdot Y_0 = Y$.
\end{theorem}

Thus what we need in each case is a formula for the class of each closed orbit, along with a concrete understanding of the combinatorics of the orbit set and its weak order, including which edges are solid and which are dashed.  The orbit sets were described in Section \ref{sec:param-clans}, and as indicated there, their weak orders are all understood combinatorially.  Thus we only need address the matter of solid versus dashed edges.

\subsection{Solid versus dashed edges}\label{sec:solid-dashed}
Though the orbit sets and their weak orders are described explicitly in \cite{Matsuki-Oshima-90}, the matter of which edges are solid and which are dashed is not fully addressed in any particular reference.  However, this is straightforward to determine by combining the combinatorics described in \cite{Matsuki-Oshima-90} with known results from other references, specifically\cite{Richardson-Springer-90,Richardson-Springer-92,Vogan-83}.  In what follows, we will use some terms from these latter references without defining them carefully; the interested reader can consult the references for definitions.

There is an edge labelled by $s_{\ga}$ connecting $Y:=\overline{Q}$ to $Y':=\overline{Q'}$ in the weak order
graph only if the simple root $\ga$ is \textbf{complex} or \textbf{non-compact imaginary} for the orbit $Q$.  Non-compact
imaginary roots are furthermore subdivided into two types, referred to as \textbf{type I} and \textbf{type II}.
In the event that the root $\ga$ is complex or non-compact imaginary of type I, the corresponding edge connecting $Y$ to $Y'$ is solid.  Only in the non-compact imaginary type II case is it dashed.

In the description of the weak order given in \cite{Matsuki-Oshima-90}, a distinction is made between complex and
non-compact imaginary roots.  However, none is made between non-compact imaginary roots of type I and type II.
These two cases can be easily distinguished, though, by considering the \textbf{cross-action} of the simple
reflection $s_{\ga}$ on the orbit $Q$.  The cross-action of $W$ on $K \backslash G/B$ is defined by
\[ w \times (K \cdot gB/B) = K \cdot gw^{-1}B/B. \]
Then if $s_{\ga} \cdot Y = Y'$, and $\ga$ is non-compact imaginary for $Q$, it is of type I (solid edge) if
$s_{\ga} \times Q \neq Q$, and of type II (dashed edge) if $s_{\ga} \times Q = Q$.

It is known how to compute the cross-action explicitly in our examples.  For instance, for the type $A$ pair 
$(SL(p+q,\C),S(GL(p,\C) \times GL(q,\C)))$, it is computed by the obvious permutation action of $w$ on the characters of
the clan indexing the orbit in question; in particular, if $
gamma=c_1 \hdots c_n$, then the cross-action of a simple reflection $s_i$ on the corresponding $Q_{\gamma}$ gives $Q_{\gamma'}$, where $\gamma'$ is obtained from $\gamma$ by interchanging $c_i$ and $c_{i+1}$.  From \cite{Matsuki-Oshima-90}, we see that a simple root $\ga_i$ is non-compact
imaginary for $Q_{\gamma}$ only in the case where $c_i$ and $c_{i+1}$ are opposite
signs (cf. the description of the weak order given in Section \ref{sec:type-a-case}).  Since the cross-action is to interchange these opposite signs, giving a different clan,
we note that all non-compact imaginary roots are of type I, and so all edges in the weak order graph are solid.

We remark that this has been noted before, for example by M. Brion in \cite{Brion-01}, and again by E.Y. Smirnov in
\cite{Smirnov-08}.  (The latter reference considers $B$-orbits on a product $X$ of two Grassmannians, a compactification of $G/K$, and proves the stronger result that the weak order graph for $B$-orbit closures on $X$ contains no dashed edges.)  Similar reasoning shows that for pairs (3) and (6) in our numbering scheme, the
weak order graph again contains only solid edges.  For pair (6), this is also noted in \cite{Brion-01}.  It can also be recovered from a suitable generalization of the aforementioned result of \cite{Smirnov-08}, proved in \cite{Achinger-Perrin}.

Pairs (2), (4), (5), and (7) can have dashed edges in their weak order graphs, as one can see by examining the
various example graphs given in the Appendix.  In the interest of brevity, we do not list here the specific
combinatorial rules for determining the weak order and the solid/dashed edges for every possible case.
The interested reader can consult \cite{Matsuki-Oshima-90} for the former, and \cite{Wyser-Thesis} for the latter.

\section{Formulas for the Equivariant Classes of Closed Orbits}
\subsection{Background and the basic method}\label{sec:eqvt-cohomology-background}
Before giving our equivariant formulas, we review some basics of equivariant cohomology which support our methods of computation.  Results of this section are stated without proof, as they are fairly standard.  The reader seeking a reference can consult \cite{Wyser-13-TG} for an expository treatment.

We work in equivariant cohomology with respect to the action of a maximal torus $S$ of $K$.  This is, by
definition,
\[ H_S^*(X) := H^*((ES \times X)/S), \]
where $ES$ denotes a contractible space with a free $S$-action.  $H_S^*(X)$ is an algebra for the ring
$\Lambda_S := H_S^*(\{\text{pt.}\})$, the $\Lambda_S$-action being given by pullback through the
obvious map $X \rightarrow \{\text{pt.}\}$.

Taking $X$ to be the flag variety $G/B$, we have the following description of $H_S^*(X)$:

\begin{prop}\label{prop:eqvt-cohom-flag-var}
Let $R = S(\frt^*)$, $R' = S(\frs^*)$.  Then $H_S^*(X) = R' \otimes_{R^W} R$.  Thus elements of
$H_S^*(X)$ are represented by polynomials in variables $x_i := 1 \otimes X_i$ and $y_i := Y_i \otimes 1$.
\end{prop}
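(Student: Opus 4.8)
The plan is to realize $H_S^*(G/B)$ via a change-of-groups (induction) argument starting from the classical Borel presentation of the $T$-equivariant cohomology of the flag variety, then to base-change along the restriction map $R \to R'$. First I would recall the Borel presentation $H_T^*(G/B) \cong R \otimes_{R^W} R$, where one copy of $R$ records the $\Lambda_T$-module structure (pullback through $G/B \to \mathrm{pt}$) and the other copy records the Chern classes of the tautological line bundles on $G/B$; equivalently, this is the statement that $H_T^*(G/B)$ is a free $\Lambda_T = R$-module with basis given (over $R^W$) by Schubert classes, and that ordinary cohomology $H^*(G/B) \cong R \otimes_{R^W} \C$ is the coinvariant algebra. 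This is the standard fact referenced implicitly in the excerpt, and I would cite it rather than reprove it.

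Next I would pass from the $T$-action to the $S$-action. The key observation is that, because $S \subset T$ and $G/B$ has a $T$-equivariant (hence $S$-equivariant) paving by affine cells — the Schubert cells — the spectral sequence for $H_S^*(G/B)$ degenerates, and $H_S^*(G/B)$ is a free module over $\Lambda_S = R'$ with the same cell basis. More precisely, one has a general principle: if $X$ has a $T$-equivariant affine paving and $S \subseteq T$ is a closed subgroup, then $H_S^*(X) \cong \Lambda_S \otimes_{\Lambda_T} H_T^*(X)$. Applying this with $X = G/B$ and the Borel presentation gives
\[ H_S^*(G/B) \cong R' \otimes_R \bigl( R \otimes_{R^W} R \bigr) \cong R' \otimes_{R^W} R, \]
where the $R$-module structure on $R'$ used in the first tensor factor is exactly the one induced by restriction of characters $R \to R'$. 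The identification of the free generators as the $x_i = 1 \otimes X_i$ (images of the line-bundle Chern classes) and the scalars $y_i = Y_i \otimes 1$ (from $\Lambda_S$) then drops out of the construction, giving the asserted description of elements of $H_S^*(X)$ as polynomials in the $x_i$ and $y_i$ (modulo the relations that the symmetric functions in the $x$'s equal the corresponding symmetric functions in the $y$'s, i.e. the $R^W$-balancing).

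The main obstacle is the base-change step: one must justify that $H_S^*(G/B) \cong \Lambda_S \otimes_{\Lambda_T} H_T^*(G/B)$, i.e. that no Tor-terms appear. This is where the affine paving is essential — it guarantees that $H_T^*(G/B)$ is a \emph{free} $\Lambda_T$-module, so the derived tensor product collapses to the ordinary one and the Eilenberg–Moore / Leray–Hirsch spectral sequence for the fibration $ES \times_S G/B \to BS$ degenerates. An alternative, and perhaps cleaner, route that avoids spectral sequences entirely: since $G/B$ admits an equivariantly formal $T$-action with finitely many fixed points, invoke the localization theorem to embed both $H_T^*(G/B)$ and $H_S^*(G/B)$ into the respective fixed-point rings $\bigoplus_{w \in W} \Lambda_T$ and $\bigoplus_{w \in W} \Lambda_S$, note that the restriction of characters induces the map on fixed-point data compatibly with the Weyl-group action, and check the GKM-type image conditions match up; this identifies the image of $R' \otimes_{R^W} R$ with $H_S^*(G/B)$ directly. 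Either way, once freeness over $\Lambda_T$ is in hand the remainder is formal, and I would present the argument in the shorter localization-based form since the localization machinery is set up in the next subsection of the paper anyway.
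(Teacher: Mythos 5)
Your proposal is correct, and since the paper defers the proof of this proposition entirely to \cite{Wyser-12} (it explicitly remarks that ``A proof of the previous theorem is given in \cite{Wyser-12}''), there is no in-text argument to compare against. The base-change route you take --- start from Borel's presentation $H_T^*(G/B) \cong R \otimes_{R^W} R$, use equivariant formality (coming from the $T$-stable affine Schubert paving, which makes $H_T^*(G/B)$ free over $\Lambda_T$) to kill the higher Tor terms in the Eilenberg--Moore spectral sequence for $BS \to BT$, and then rewrite $R' \otimes_R (R \otimes_{R^W} R) \cong R' \otimes_{R^W} R$ --- is the standard one for passing from $T$- to $S$-equivariant cohomology of a subtorus, and it correctly handles the one unequal-rank case in the paper where $S \subsetneq T$. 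Your alternative GKM route is also sound here, but note it additionally relies on $(G/B)^S = (G/B)^T$ being finite, which in the symmetric-pair setting is Proposition~\ref{prop:s-fixed-points} (cited from \cite{Brion-99}); the base-change argument needs no such input and is therefore the cleaner one to lead with.
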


In the equal rank case, where $S=T$, the statement of this theorem is the standard fact that
$H_T^*(X) = R \otimes_{R^W} R$, for which a proof can be found in \cite{Brion-98_i}.  As we have noted, this is the case in $6$ of our $7$ examples.

Next, we recall the standard localization theorem for torus actions, which can also be found in
\cite{Brion-98_i}:

\begin{theorem}\label{thm:eqvt-localization}
Let $X$ be an $S$-variety, and let $i: X^S \hookrightarrow X$ be the inclusion of the $S$-fixed locus of $X$.  The pullback map of $\Lambda_S$-modules 
\[ i^*: H_S^*(X) \rightarrow H_S^*(X^S) \]
is an isomorphism after a localization which inverts finitely many characters of $S$.  In particular, if $H_S^*(X)$ is free over $\Lambda_S$, then $i^*$ is injective. 
\end{theorem}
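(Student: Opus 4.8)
\emph{Proof proposal.} This is the Atiyah--Bott--Borel--Quillen localization theorem, and I would argue in the classical way, reducing everything to a computation on $S$-orbits. The one algebraic input needed is that $\Lambda_S \cong R' = S(\frs^*)$ is a polynomial ring, hence an integral domain, and that a finitely generated $\Lambda_S$-module is killed after inverting finitely many nonzero characters (equivalently, finitely many nonzero linear forms in $\frs^*$) precisely when its support in $\frs^*$ is a finite union of proper linear subspaces. The plan is: (i) reinterpret the failure of $i^*$ to be an isomorphism via the relative group $H_S^*(X, X^S)$ and show this group is ``torsion'' in the above sense; (ii) prove that torsion statement by induction along a finite $S$-stable stratification of $X$ whose base stratum is $X^S$; (iii) invert the (finitely many) relevant characters, and use exactness of localization together with the long exact sequence of the pair and the five lemma; (iv) deduce the injectivity clause from freeness, using that $\Lambda_S$ is a domain.

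For step (ii) I would use that $X$ --- here a complex algebraic variety, in fact the projective variety $G/B$ --- admits a finite filtration $\emptyset = X_{-1} \subset X_0 \subset \cdots \subset X_N = X$ by $S$-stable closed subsets with $S$-stable locally closed differences $X_j \smallsetminus X_{j-1}$, arranged so that $X^S$ is one of the differences and every other difference has all $S$-isotropy groups with identity component a \emph{proper} subtorus of $S$ (refine an orbit-type decomposition, or use a Bialynicki-Birula decomposition coming from a generic one-parameter subgroup of $S$). On a single orbit $O \cong S/S'$ with $S' \neq S$ one has $H_S^*(O) \cong H_{S'}^*(\mathrm{pt}) = \Lambda_{S'}$, which as a $\Lambda_S$-module is the quotient of $S(\frs^*)$ by the ideal generated by the linear forms vanishing on $\mathrm{Lie}(S')$; since $S' \subsetneq S$, some such linear form $\chi$ is nonzero and annihilates $H_S^*(O)$. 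Feeding the long exact sequences of the pairs $(X_j, X_{j-1})$ into an induction on $j$ --- and using excision and Thom isomorphisms to identify the relative terms with $H_S^*$ of (tubular neighbourhoods of) the strata --- shows that $H_S^*(X, X^S)$ is annihilated by the product $f$ of the finitely many characters $\chi$ attached to the finitely many isotropy subtori occurring.

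With $H_S^*(X, X^S)_f = 0$ in hand, the long exact sequence of $(X, X^S)$, localized at $f$ (localization is exact, so it commutes with all cohomology functors involved), gives that $i^*$ becomes an isomorphism after inverting $f$; this is the first assertion. For the second, assume $H_S^*(X)$ is free over $\Lambda_S$. Since $\Lambda_S$ is an integral domain, the localization map $H_S^*(X) \to H_S^*(X)_f$ is injective (a free module over a domain embeds into any localization at a multiplicative set avoiding $0$). In the commutative square relating $i^*$, its localization $i^*_f$, and the two structural localization maps, $i^*_f$ is injective (it is even an isomorphism) and the left vertical map $H_S^*(X) \to H_S^*(X)_f$ is injective, so $i^*$ is injective, as claimed.

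The only place genuine care is required is step (ii): producing a \emph{finite} $S$-stable stratification adapted to $X^S$ and correctly identifying each relative cohomology group $H_S^*(X_j, X_{j-1})$ with the equivariant cohomology of the corresponding stratum, so that the orbit computation propagates. For $X = G/B$ this is painless: a Bialynicki-Birula (or Bruhat-type) cell decomposition associated to a generic one-parameter subgroup of $S$ does the job, each cell retracting $S$-equivariantly onto a component of the fixed locus, so the argument collapses to the orbit computation above; indeed, when $S = T$ the fixed locus is finite and the statement is precisely the familiar embedding $H_T^*(X) \hookrightarrow H_T^*(X^T)$ recorded in \cite{Brion-98_i}.
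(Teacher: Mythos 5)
The paper does not actually prove this theorem; it states it as the ``standard localization theorem'' and cites Brion's notes \cite{Brion-98_i}, so there is no in-paper argument to compare yours against. Your proposal is a reasonable outline of the classical Atiyah--Bott--Borel--Quillen argument and, as a sketch, it is essentially correct: reduce to showing $H_S^*(X,X^S)$ is annihilated by a product $f$ of characters, obtain that annihilation by filtering along a finite $S$-stable stratification whose non-fixed strata all have isotropy with identity component a proper subtorus, pass to the localization $(-)_f$ using exactness and the long exact sequence of the pair, and deduce injectivity from freeness over the domain $\Lambda_S$. That is exactly the shape of the proof one finds in \cite{Brion-98_i} and in the original sources.

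A few places where I would tighten the write-up if this were to become an actual proof rather than an outline. First, $H_S^*(S/S')$ is $H^*(BS')$, which with $\C$-coefficients is $H^*(B(S')^0)^{\pi_0(S')}$ rather than literally $\Lambda_{S'}$ when $S'$ is disconnected; this does not change the conclusion you need (any nonzero $\chi\in\frs^*$ vanishing on $\mathrm{Lie}(S')$ still annihilates it as a $\Lambda_S$-module), but the identification should be stated correctly. Second, ``excision and Thom isomorphisms'' for identifying the relative groups $H_S^*(X_j,X_{j-1})$ needs a specific choice of filtration by closed (or by open) $S$-stable subsets compatible with the stratification, and for the Bialynicki--Birula/Bruhat filtration on $G/B$ one also uses that the cells are affine bundles over fixed components, so that the equivariant Thom--Gysin sequences are available over $\C$-coefficients; this is routine but is where the care you flag lives. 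Third, the theorem in the paper is stated for a general $S$-variety $X$, and there the finiteness input is that an algebraic torus acting algebraically on a variety has only finitely many isotropy subgroup types, giving the needed finite stratification in general, not just for $G/B$; your argument does handle this, but only the $G/B$ case is spelled out. None of these affect the correctness of the approach.
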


When $X$ is the flag variety, $H_S^*(X) = R' \otimes_{R^W} R$
is free over $R'$, so any equivariant class is entirely determined by its image under $i^*$.
Further, the $S$-fixed locus coincides with the $T$-fixed locus.  This is of course obvious when $S=T$.  In the more general case, and in particular in case (7), we have the following result:

\begin{prop}[\cite{Brion-99}]\label{prop:s-fixed-points}
If $K = G^{\theta}$ is a symmetric subgroup of $G$, $T$ is a $\theta$-stable maximal torus of $G$, and
$S$ is a maximal torus of $K$ contained in $T$, then $(G/B)^S = (G/B)^T$.  Thus $(G/B)^S$ is finite, and
indexed by the Weyl group $W$.
\end{prop}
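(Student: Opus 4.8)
The plan is to treat this as a statement about the centralizer $Z_G(S)$. The inclusion $(G/B)^T \subseteq (G/B)^S$ is automatic since $S \subseteq T$, and $(G/B)^T$ is the familiar finite set $\{wB : w \in W\}$, so everything reduces to the reverse inclusion, which I would in turn reduce to the single claim that $Z_G(S) = T$. Granting that claim: if $gB \in (G/B)^S$, then the Borel subgroup $B' := gBg^{-1}$ contains $S$, so $B' \cap Z_G(S)$ is a Borel subgroup of $Z_G(S)$ (the intersection of a Borel of $G$ with the centralizer of a subtorus it contains is a Borel of that centralizer --- a standard structural fact, provable by a root-space count); but $Z_G(S) = T$ is a torus, whose only Borel subgroup is $T$ itself, so $T \subseteq B'$, i.e.\ $gB \in (G/B)^T$. (Equivalently, $Z_G(S) = T$ says a generic cocharacter of $S$ is regular, so one could instead invoke Bialynicki-Birula.)

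The real work is then to prove $Z_G(S) = T$. Since $S \subseteq T$ and $T$ is abelian we have $T \subseteq Z_G(S)$, and as $T$ is maximal it suffices to show the connected reductive group $Z_G(S)$ has no roots relative to $T$, i.e.\ that no root $\alpha$ of $(G,T)$ satisfies $\alpha|_{\frs} = 0$. Here I would bring in $\theta$: decompose $\frt = \frt_+ \oplus \frt_-$ into the $\pm 1$ eigenspaces of $d\theta$, so that $\frt_+ = \mathrm{Lie}(T^\theta)$; since $S$ is a maximal torus of $K$ contained in $T$, it lies in $(T^\theta)^\circ$, which is itself a torus of $K$ containing $S$, whence $S = (T^\theta)^\circ$ and $\frs = \frt_+$. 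A root $\alpha$ of $(G,T)$ occurs in $Z_G(S)$ precisely when $\alpha|_\frs = 0$, and because $\frs = \frt_+$ this is exactly the condition that $\alpha$ lie in the $(-1)$ eigenspace of the induced action of $\theta$ on $\frt^*$, i.e.\ $\theta\cdot\alpha = -\alpha$. So the problem becomes: show there are no such ``real'' roots.

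This last step is the one I expect to be the crux, and it is where the maximality hypothesis is essential. Suppose $\alpha$ is a real root. I would examine the $\theta$-stable rank-one subgroup $G_\alpha = \langle U_\alpha, U_{-\alpha}\rangle$, with one-dimensional torus $T_\alpha = \mathrm{im}(\alpha^\vee)$: because $\theta\cdot\alpha = -\alpha$, the involution $\theta$ exchanges $U_\alpha$ with $U_{-\alpha}$ and inverts $T_\alpha$, so $\theta|_{G_\alpha}$ is (inner, hence) conjugation by a representative of the nontrivial element of the Weyl group of $G_\alpha$; a direct computation in $\SL_2$ (or $\mathrm{PGL}_2$) --- the centralizer in $\SL_2(\C)$ of the standard Weyl representative $\left(\begin{smallmatrix}0&1\\-1&0\end{smallmatrix}\right)$ is $\SO(2,\C)\cong\C^\times$ --- then exhibits a one-dimensional torus $S_\alpha \subseteq (G_\alpha)^\theta$ with $S_\alpha \not\subseteq T$. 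On the other hand $\alpha|_\frs = 0$ forces $S \subseteq \ker\alpha$, so $S$ centralizes $T_\alpha$ and each $U_{\pm\alpha}$, hence centralizes $G_\alpha$, and in particular $S_\alpha$; since $S_\alpha$ is a one-dimensional torus not contained in $T \supseteq S$, the product $S \cdot S_\alpha$ would be a torus of $K$ strictly larger than $S$, contradicting maximality. Thus no real roots exist, $Z_G(S) = T$, and the reduction of the first paragraph yields $(G/B)^S = (G/B)^T$. (Note that the assumption that $S$ is \emph{maximal} in $K$ --- equivalently, that $\dim T^\theta$ is as large as possible among $\theta$-stable maximal tori --- is precisely what rules out real roots; the conclusion genuinely fails without it, e.g.\ for a split Cartan subgroup.)
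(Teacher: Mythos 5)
Your proof is correct. The paper offers no proof of this Proposition, attributing it to \cite{Brion-99}; however, the key lemma you isolate --- that $T = Z_G(S)$ --- is exactly the fact the paper itself invokes (citing Springer and Brion) a few paragraphs later when showing $N_K(S) \subseteq N_G(T)$, so your argument supplies in full the structure that the paper leaves to its references. The reduction in your first paragraph (Borel containing $S$ meets $Z_G(S)$ in a Borel of $Z_G(S)$, which is just $T$) is the standard one, and the rank-one step correctly rules out real roots under the hypothesis that $S = (T^\theta)^\circ$ is a maximal torus of $K$. Two minor points worth keeping in mind, neither of which affects correctness: the group $G_\alpha$ may be $\PSL_2$ rather than $\SL_2$ (when $\alpha^\vee$ has nontrivial kernel), though the conclusion that $(G_\alpha)^\theta$ contains a one-dimensional torus transverse to $T \cap G_\alpha$ holds either way; and the step asserting $\theta|_{G_\alpha}$ is inner, while true since type $A_1$ has no outer automorphisms, can be sidestepped by directly computing the $\theta$-fixed line $\C\bigl(X_\alpha + c X_{-\alpha}\bigr)$ in $\frg_\alpha \oplus \C H_\alpha \oplus \frg_{-\alpha}$ and noting it consists of semisimple elements, hence exponentiates to a one-dimensional torus of $K$ not contained in $T$.
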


So for us, 
\[ H_S^*(X^S) \cong \bigoplus_{w \in W} \Lambda_S, \]
so that in fact a class in $H_S^*(X)$ is determined by its image under $i_w^*$ for each $w \in W$,
where here $i_w$ denotes the inclusion of the $S$-fixed point $wB/B$.  Given a class $\beta \in H_S^*(X)$
and an $S$-fixed point $wB/B$, we will typically denote the restriction $i_w^*(\beta)$ at $wB/B$ by $\beta|_w$.

We recall how the restriction maps are computed:
\begin{prop}\label{prop:restriction-maps}
Suppose that $\beta \in H_S^*(X)$ is represented by the polynomial $f = f(x,y)$ in the $x_i$ and $y_i$.  Then $\beta|_w \in \Lambda_S$ is the polynomial $f(\rho(wX),Y)$, with $\rho$ denoting restriction $\frt^* \rightarrow \frs^*$.
\end{prop}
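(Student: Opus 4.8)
The plan is to reduce everything to a computation on algebra generators of $H_S^*(X)$, using the structural facts already in hand: $H_S^*(X) = R' \otimes_{R^W} R$ by Proposition \ref{prop:eqvt-cohom-flag-var}, the $S$-fixed points are exactly the $wB$ with $w \in W$ by Proposition \ref{prop:s-fixed-points}, and each inclusion $i_w \colon \{wB\} \hookrightarrow X$ is $S$-equivariant with $H_S^*(\{wB\}) = \Lambda_S \cong R'$.

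First I would dispose of the equivariant parameters. Since $i_w$ fits into a commuting triangle $\{wB\} \hookrightarrow X \to \{\text{pt.}\}$ whose diagonal is the structure map of a point, the homomorphism $i_w^*$ is the identity on the subring of $H_S^*(X)$ pulled back from $\Lambda_S = H_S^*(\{\text{pt.}\})$; in particular $i_w^*(y_i) = Y_i$ for all $i$. As $i_w^*$ is moreover a ring homomorphism it is $\Lambda_S$-linear, and since $R = S(\frt^*)$ is generated as a $\C$-algebra by $X_1, \dots, X_n$, the tensor product $R' \otimes_{R^W} R$ is generated as an $R' \cong \Lambda_S$-algebra by $x_1 = 1 \otimes X_1, \dots, x_n = 1 \otimes X_n$. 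So the proposition follows once I identify $i_w^*(x_i) = \rho(wX_i)$: then for $\beta$ represented by $f = f(x,y)$, multiplicativity and $\Lambda_S$-linearity give $\beta|_{wB} = f(i_w^*(x_1), \dots, i_w^*(x_n), i_w^*(y_1), \dots, i_w^*(y_r)) = f(\rho(wX), Y)$.

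To compute $i_w^*(x_i)$ I would pass through the functorial map $H_T^*(X) \to H_S^*(X)$ induced by $S \hookrightarrow T$: it carries the $T$-Borel class $1 \otimes X_i \in R \otimes_{R^W} R = H_T^*(X)$ to $x_i$, intertwines the two restriction maps at $wB$, and on $H^*(\{\text{pt.}\})$ is the algebra map $R \to R'$ whose effect on the degree-two generators is $\rho$. Hence it suffices to compute the $T$-equivariant restriction of $1 \otimes X_i$ at the $T$-fixed point $wB$. Under the normalization of the Borel isomorphism used in \cite{Wyser-12}, $1 \otimes X_i$ is the $T$-equivariant first Chern class of a $G$-equivariant line bundle $\mathcal{L}_i$ on $G/B$ whose fibre over the base point $eB$ is the one-dimensional $T$-module of weight $X_i$; translating by $w$, the fibre of $\mathcal{L}_i$ over $wB$ is the $T$-module of weight $wX_i$. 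Since the $T$-equivariant first Chern class of a one-dimensional $T$-module of weight $\chi$ is $\chi$ itself, viewed in $H_T^2(\{\text{pt.}\}) \cong \frt^*$, we get $i_w^*(1 \otimes X_i) = wX_i$ in $H_T^*$, hence $i_w^*(x_i) = \rho(wX_i)$ in $H_S^*$, which closes the argument.

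The conceptual content is thus minimal, and the one place demanding care is matching conventions: checking that, with the chosen normalization of the Borel isomorphism and the identification of $(G/B)^T$ with $W$, the fixed point labeled $w$ really contributes the untwisted weight $wX_i$ rather than $w^{-1}X_i$ or a sign-reversed version. I expect this bookkeeping to be the only obstacle, and it is handled exactly as in the equal-rank case treated in \cite{Wyser-12}, from which one could alternatively deduce the present statement by simply composing the known $T$-equivariant restriction formula with $\rho$.
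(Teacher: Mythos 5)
The paper does not give its own proof of this proposition; it is one of the facts stated without argument at the top of Section~1, with the reader referred to \cite{Wyser-12} and to standard equivariant cohomology. Your proof is correct and is the standard argument: reduce to computing $i_w^*$ on the algebra generators $x_i$ and $y_i$ over $\Lambda_S$, handle the $y_i$ by functoriality of the structure map to a point, and compute $i_w^*(x_i)$ by factoring through the change-of-group map $H_T^*(X)\to H_S^*(X)$ and using the usual identification of the $T$-equivariant restriction of the Borel class $1\otimes X_i$ at the fixed point $wB$ with the weight of the tautological line bundle there. The only thing one must check carefully — which you correctly flag — is the convention normalizing the Borel isomorphism, so that the restriction at $wB$ is $wX_i$ rather than $w^{-1}X_i$ (this depends on whether $1\otimes X_i$ is Chern class of $\mathcal{L}_{X_i}$ or its dual, and on the sign convention for the fixed-point labeling); this is fixed as in \cite[Proposition 1.1 and its proof]{Wyser-12}, which the present paper is relying on. So your argument is a faithful filling-in of the omitted proof rather than a genuinely different route.
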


By the standard background covered above, we see that for a given closed $K$-orbit $Y$,
the equivariant class $[Y]$ is completely determined by its restrictions $[Y]|_w$ for $w \in W$.  The idea, then, is to compute these restrictions and then try to ``guess" a formula for $[Y]$ (that is, a polynomial in the $\xx$ and $\yy$-variables of Proposition \ref{prop:eqvt-cohom-flag-var} which represents $[Y]$) based on them.

The following proposition tells us precisely how to compute $[Y]|_w$.
\begin{prop}\label{prop:restriction-of-closed-orbit}
Let $\Phi$ denote the root system for $(G,T)$, and choose $\Phi^+$ to be the positive system for $G$ such that the roots of the Borel $B$ are $-\Phi^+$.  Let $\Phi_K$ denote the roots of $K$.  Consider the (multi)-set of weights $\rho(w \Phi^+) \subset \frs^*$, where $\rho: \frt^* \rightarrow \frs^*$ denotes the restriction map.  Let $\mathcal{R} = \rho(w \Phi^+) - \Phi_K$.  For a closed $K$-orbit $Y$, we have 
\begin{equation}\label{eqn:compute-restrictions}
[Y]|_w = 
\begin{cases}
	\displaystyle\prod_{\alpha \in \mathcal{R}} \alpha & \text{ if $w \in Y$}, \\
	0 & \text{ otherwise.}
\end{cases}
\end{equation}
\end{prop}

This is proved in \cite{Wyser-13-TG}, and follows from the self-intersection formula.  As mentioned previously, in cases (1)-(6) when $S=T$, the map $\rho$ is simply given by $X_i \mapsto Y_i$, and we generally omit it from the notation.

\subsection{Closed orbits and their torus fixed points}\label{sec:closed-orbits-and-fixed-points}
It is clear from the general arguments of Section \ref{sec:eqvt-cohomology-background} that to compute the classes of
closed orbits, we need to know which orbits are closed with respect to the
parametrizations we have described, as well as which torus fixed points are contained in each.  This can be easily
determined from known results, as we now describe.  References for this section are
\cite{Matsuki-Oshima-90,Yamamoto-97,Wyser-Thesis}.

Recalling the parametrization of $K$-orbits by clans described in Sections \ref{sec:type-a-case} and
\ref{sec:other-cases}, the closed orbits correspond precisely to those clans consisting of only signs in all cases but
(7).  In case (7), the closed orbits correspond to the symmetric $(2p+1,2q-1)$-clans of the form
$\pm^{(n-1)} 1 1 \pm^{(n-1)}$, i.e. symmetric clans consisting of $n-1$ signs, followed by a pair of matched numbers,
followed by another $n-1$ signs.  (Note that by parity considerations, there are no symmetric $(2p+1,2q-1)$-clans
consisting only of signs.)

Now, we describe which torus fixed points are contained in which closed orbits.  It is easy to see that any closed
$K$-orbit is isomorphic to the flag variety for $K$ if $K$ is connected, or to a disjoint union of
$[W_K: W_{K^0}]$ copies of the flag variety for $K^0$ otherwise.  Thus each closed orbit contains $|W_K|$ $S$-fixed
points.  Moreover, it is clear that the $S$-fixed points contained in a given closed orbit $K \cdot wB/B$ ($wB/B$ an
$S$-fixed point, with $w \in W$) are precisely the elements of the left coset $W_Kw \subseteq W$.

In general, though, the orbit of any particular $S$-fixed point may not be closed.  The only time this actually
happens for us is in case (7).  So we first describe the situation for cases (1)-(6), then consider case (7)
separately.

\subsubsection{Equal rank:  Cases (1)-(6)}\label{sec:equal-rank-cases}
By results of \cite{Springer-85,Richardson-Springer-90}, the orbit of every $S$-fixed point is closed if and only if
$\text{rank}(K) = \text{rank}(G)$, if and only if $K=G^{\theta}$ for an inner involution $\theta$.  These equivalent
conditions all hold in cases (1)-(6).

Thus in each case, there are $|W|/|W_K|$ closed orbits, each containing $|W_K|$ $S$-fixed points.   Note that for
cases (2) and (5), where $K$ has $2$ components, the closed orbits each have $2$ components, each the union of $2$
distinct $K^0$-orbits.  In these cases, if $wB/B$ is an $S$-fixed point, its $K$-orbit contains precisely the
$S$-fixed points corresponding to $W_Kw \subseteq W$, whereas its $K^0$-orbit contains only those $S$-fixed points
corresponding to $W_{K^0}w$.  (Recall that $W_{K^0}$ is of index $2$ in $W_K$ in each case; cf. Section
\ref{sec:specific-cases}.)

With the above observations made, it suffices to simply describe how to give a single $S$-fixed representative of
each closed orbit as a function of the corresponding clan, which consists only of signs.  For case (1), there is an
algorithm given in \cite{Yamamoto-97} which produces a representative of the $K$-orbit indexed by any clan.  When the
clan consists only of signs, this algorithm always outputs an $S$-fixed point corresponding to a permutation whose
one-line notation places $1,\hdots,p$ (in arbitrary order) in the positions of the $+$ signs, and $p+1,\hdots,n$ (in
arbitrary order) in the positions of the $-$ signs.  The particular permutation output by the algorithm depends upon
a choice; it is natural to choose the permutation whose one-line notation places both $1,\hdots,p$ and $p+1,\hdots,n$
in ascending order.  So for instance, the orbit indexed by $(3,3)$-clan $+++---$ contains the standard coordinate
flag, which corresponds to the identity permutation $123456$.  The orbit corresponding to the $(5,2)$-clan $+-+++-+$
contains the $S$-fixed corresponding to $1623475$.

From this description of the closed $K$-orbits in type $A$, together with the fact that the orbits in the other cases
are intersections of these with a smaller flag variety of type $BCD$, it is also easy to specify an $S$-fixed
representative of each closed orbit in cases (2)-(6).  One chooses a representative in one of the following two ways:

\begin{itemize}
	\item Cases (2), (3), (5):  Given a symmetric $(2p,2q+1)$ or $(2p,2q)$-clan $\gamma$, considering only the first $n=p+q$ characters (which are necessarily $p$ $+$ signs and $q$ $-$ signs), choose a signed permutation $w$ of $\{1,\hdots,n\}$ \textit{with no sign changes} just as we do in case (1).
	\item Cases (4), (6):  Given a skew-symmetric $(n,n)$-clan $\gamma$, considering only the first $n$ characters (which may be any collection of $n$ signs), choose the signed permutation $w$ whose one-line notation puts $1,\hdots,n$ in order, with bars over those values in positions corresponding to $-$ signs.
\end{itemize}

As examples, a representative of the closed $S(O(4,\C) \times O(5,\C))$-orbit on $SO(9,\C)/B$ corresponding to
$+-+---+-+$ is $1324$ (interpreted as a signed permutation).  A representative of the closed $GL(4,\C)$-orbit on
$Sp(8,\C)/B$ corresponding to $+-+-+-+-$ is $1 \overline{2} 3 \overline{4}$.

\subsubsection{Case (7)}\label{sec:case-7}
Since this is the only case in which the rank of $K$ is less than that of $G$, it is the only case where there exist
$S$-fixed points whose $K$-orbits are not closed.  So we must first determine which $w \in W$ are such that
$K \cdot wB/B$ is closed.

\begin{prop}\label{prop:type-d-ex-3-closed-orbits}
Let $wB/B$ be an $S$-fixed point, with $w \in W$.  Then $K \cdot wB/B$ is closed if and only if $w(n) = \pm(p+1)$.
\end{prop}
\begin{proof}
This follows from \cite[Proposition 1.4.3]{Richardson-Springer-92}, which states the following:

\begin{prop}
For $w \in W$, the $K$-orbit through $wB/B$ is closed if and only if $wBw^{-1}$ is $\theta$-stable.
\end{prop}

Since we have chosen $B$ to be the negative Borel, the condition that $wBw^{-1}$ be $\theta$-stable is equivalent to
the condition that $w \Phi^-$ is a $\theta$-stable subset of $\Phi$.  One checks easily that the action of $\theta$
on $\Phi$ is defined by $\theta(X_i) = X_i$ for $i \neq p+1$, and $\theta(X_{p+1}) = -X_{p+1}$.  Any positive system
contains, for each $i<j$, exactly one of $X_i + X_j$ and $-X_i - X_j$, and exactly one of $X_i - X_j$ and $-X_i + X_j$.
For $i,j \neq p+1$, all such roots are fixed by $\theta$.  Thus for $\theta$-stability, it suffices to focus on roots
of the form $\pm X_i \pm X_{p+1}$, with $i \neq p+1$.  It is easy to check that a positive system is $\theta$-stable
if and only if it contains either $\{X_i-X_{p+1}, X_i+X_{p+1}\}$ or $\{-X_i + X_{p+1}, -X_i-X_{p+1}\}$ for each
$i \neq p+1$.

This holds if and only if $w(n) = \pm(p+1)$.  Recall that $w\Phi^- = \{-wX_i \pm wX_j \mid i < j\}$.  Suppose that
$w(n) = \pm(p+1)$.  Let $i \neq p+1$ be given, with $k = |w|^{-1}(i)$.  Then $-wX_k \pm wX_n$ is either the set
$\{X_i + X_{p+1}, X_i - X_{p+1}\}$ or $\{-X_i + X_{p+1}, -X_i - X_{p+1}\}$, as required.  Conversely, suppose that
$|w(n)| = j \neq p+1$.  Let $k = |w|^{-1}(p+1)$.  Then $-wX_k \pm wX_n$ is either the set
$\{-X_{p+1} + X_j, -X_{p+1} - X_j\}$ or $\{X_{p+1} + X_j, X_{p+1} - X_j\}$, and thus $w \Phi^-$ is not $\theta$-stable.
This establishes the claim.
\end{proof}

Now, we describe a preferred representative of each closed orbit.  Note that any $u \in W$ such that 
$u(n) = \pm(p+1)$ belongs to the same left $W_K$-coset as a unique element $w \in W$ having the following properties:
\begin{enumerate}
	\item $w$ changes no signs.
	\item $w(n) = p+1$.
	\item $w^{-1}(1) < w^{-1}(2) < \hdots < w^{-1}(p)$.
	\item $w^{-1}(p+2) < w^{-1}(p+3) < \hdots < w^{-1}(n)$.
\end{enumerate}

Indeed, as we have noted, elements of $W_K$ are precisely those separately signed permutations of $\{1,\hdots,p\}$ and
$\{p+2,\hdots,n\}$ which either fix $p+1$ or send it to its negative so as to ensure that the entire
signed permutation changes an even number of signs.  Supposing that in the one-line notation for $u$, the 
values $1,\hdots,p$ (possibly with minus signs) occur out of order, then there is precisely one signed permutation
of $\{1,\hdots,p\}$ which will put them in order and remove all negative signs, and likewise for the set
$\{p+2,\hdots,n\}$.  Taking $w' \in W_K$ to be this element, we have that $w'u = w$.

As an example of the above, suppose that $p=q=3$, and let $u$ be the signed permutation $\overline{3} 1 6 2 5\overline{4}$.
To change the $\overline{3}12$ to $123$, we must multiply on the left by $1 \mapsto 2$, $2 \mapsto 3$,
$3 \mapsto \overline{1}$, and to change the $65$ to $56$ we must multiply on the left by
$5 \mapsto 6, 6 \mapsto 5$.  Thus we multiply $u$ on the left by $w' = 23\overline{1}\overline{4}65$ to get
$w'u = w = 125364$.

\begin{definition}
We refer to the unique element $w$ described above as the \textbf{standard representative} of its $K$-orbit.
\end{definition}

Note that the standard representative of a closed $K$-orbit is completely determined by the positions
(in its one-line notation) of $1,\hdots,p$ among the first $n-1$ spots, which can be chosen freely.
There are $\binom{n-1}{p}$ such $w$, and hence $\binom{n-1}{p}$ closed $K$-orbits.  These correspond to the symmetric
$(2p+1,2q-1)$-clans of the form $\pm^{(n-1)} 11 \pm^{(n-1)}$ mentioned in Section \ref{sec:specific-cases}.
Indeed, given such a clan, one reads off the standard representative of the orbit in a manner very similar to previous
cases:  Considering only the first $n-1$ characters of such a clan (comprised of $p$ $+$ signs and $q$ $-$ signs in
arbitrary order), the one-line notation of the standard representative places $1,\hdots,p$ in order in the positions
of the $+$ signs, $p+2,\hdots,n$ in order in the positions of the $-$ signs, and $p+1$ in position $n$.

We remark that the closed $K$-orbits are connected in this case, even though $K$ is not.  This can be deduced from a
simple counting argument or, alternatively, from the fact that $W_K = W_{K^0}$.

\subsection{Statement and proof of the main result}
In this section, we give formulas for the equivariant classes of closed $K$-orbits in our various examples.  In each case, the method of proof is as follows.  Using the information of Section \ref{sec:closed-orbits-and-fixed-points} on the closed orbits and the torus fixed points contained in each, combined with Proposition \ref{prop:restriction-of-closed-orbit}, we compute the restriction of the class of each closed orbit to each of the torus-fixed points.   Finally, employing Proposition \ref{prop:restriction-maps}, we verify that our putative formulas localize as required, which proves their correctness by Theorem \ref{thm:eqvt-localization}.

We start by summarizing our results in tabular format, so that they are all in one place and easily accessible, as opposed to being sprinkled over several subsections.  So that the formulas in the table will make sense, we first define several notations which appear in them.

\begin{definition}
Given any permutation $w \in S_{p+q}$, denote by $l_p(w)$ the number 
\[ l_p(w) := \# \{ (i,j) \ \vert \ 1 \leq i < j \leq n, w(j) \leq p < w(i) \}. \]
\end{definition}

\begin{definition}
Given any signed permutation $w$ of $\{1,\hdots,n\}$, define the (ordinary) permutation $|w|$ by $|w|(i) = |w(i)|$.  For example, $|\overline{2}\overline{4}13\overline{5}| = 24135$.
\end{definition}

\begin{definition}
Given any signed permutation $w$ of $\{1,\hdots,n\}$, and $p < n$, define 
\[ \phi_p(w) = \#\{i \in \{1,\hdots,n\} \mid w(i) < 0, |w(i)| \leq p\}. \]

For example, if
\[ n = 5, p = 3, w = \overline{2}\overline{4}13\overline{5}, \]
then $\phi_3(w) = 1$.
\end{definition}

We remark that the function $\phi_p$ does not appear in Table \ref{tab:formulas}, but it is in fact referred to and used in the proof of Theorem \ref{thm:formulas}.

\begin{definition}
For any signed permutation $w$ of $n$ elements, define the set 
\[ \text{Neg}(w) := \{i \ \vert \ w(i) < 0 \}. \]
Define $\N$-valued functions $\psi$, $\sigma$ on the set of such permutations by
\[ \psi(w) = \#\text{Neg}(w), \]
and
\[ \sigma(w) = \displaystyle\sum_{i \in \text{Neg}(w)}(n - i). \]
\end{definition}

\begin{definition}
Given a signed permutation $w$ of $n$ elements, denote by $\Delta_n(\xx,\yy,w)$ the $n \times n$ determinant 
\[ \Delta_n(\xx,\yy,w) := \det(c_{n+1+j-2i}), \]
where
\[ c_k = e_k(x_{w^{-1}(1)},\hdots,x_{w^{-1}(n)}) + e_k(y_1,\hdots,y_n). \]
Here, $e_k$ denotes the $k$th elementary symmetric function in the inputs, and $x_{w^{-1}(i)}$ means $x_{w^{-1}(i)}$ if $w^{-1}(i) > 0$, and $-x_{|w^{-1}(i)|}$ if $w^{-1}(i) < 0$.
\end{definition}

\begin{definition}
Given a signed permutation $w$ of $n$ elements, and any $p<n$, denote by 
\[ I_{w,p} := \{ i \in \{1,\hdots,n-1\} \mid w(i) > p+1\}. \]
For each $i \in I(w,p)$, define
\[ C_{w,p}(i) := \{j \mid i < j \leq n-1, w(j) \leq p\}. \]
Finally, define an $\N$-valued function $\tau_p$ on such signed permutations by
\[ \tau_p(w) := \displaystyle\sum_{i \in I_{w,p}} \#C_{w,p}(i). \]
\end{definition}

\begin{theorem}\label{thm:formulas}
The formulas for the equivariant classes of closed $K$-orbits in all of our cases are given in Table \ref{tab:formulas}
(on the next page).  In all cases, $n = p+q$.
\end{theorem}
\newpage
\begin{landscape}
\begin{table}[!ht]
\begin{center}
\resizebox{21cm}{4cm}{
\begin{tabular}{|l|l|l|l|}
\hline
		Case & $(G,K)$ & Parameters for Closed Orbits & Formula for $K \cdot wB/B$ \\ \hline \hline
		(1) & $(GL(n,\C),GL(p,\C) \times GL(q,\C))$ & $(\pm)_{p,q}$ & $(-1)^{l_p(w)} \displaystyle\prod_{i \leq p < j}(x_{w^{-1}(i)} - y_j)$ \\ \hline
		(2) & $(SO(2n+1,\C),S(O(2p,\C) \times O(2q+1,\C)))$ & Symmetric $(\pm)_{2p,2q+1}$ & $(-1)^{l_p(|w|)} x_{|w|^{-1}(1)} \hdots x_{|w|^{-1}(p)} \displaystyle\prod_{i \leq p < j}(x_{w^{-1}(i)} - y_j)		(x_{w^{-1}(i)} + y_j)$ \\ \hline
		(3) & $(Sp(2n,\C),Sp(2p,\C) \times Sp(2q,\C))$ & Symmetric $(\pm)_{2p,2q}$ & $(-1)^{l_p(|w|)} \displaystyle\prod_{i \leq p < j}(x_{w^{-1}(i)} - y_j)(x_{w^{-1}(i)} + y_j)$ \\ \hline
		(4) & $(Sp(2n,\C),GL(n,\C))$ & Skew-symmetric $(\pm)_{n,n}$ & $(-1)^{\psi(w)+\sigma(w)} \Delta_n(\xx,\yy,w)$ \\ \hline
		(5) & $(SO(2n,\C),S(O(2p,\C) \times O(2q,\C)))$ & Symmetric $(\pm)_{2p,2q}$ & 
		$(-1)^{l_p(|w|)} \displaystyle\prod_{i \leq p < j}(x_{w^{-1}(i)} - y_j)(x_{w^{-1}(i)} + y_j)$ \\ \hline
		(6) & $(SO(2n,\C),GL(n,\C))$ & Skew-symmetric $(\pm)_{n,n}$ & $(-1)^{\sigma(w)} (\frac{1}{2})^{n-1} \Delta_{n-1}(\xx,\yy,w)$ \\ \hline
		(7) & $(SO(2n,\C),S(O(2p+1,\C) \times O(2q-1,\C)))$ & Symmetric $(\pm,1,1,\pm)_{2p+1,2q-1}$ & $(-1)^{\tau_p(w)} x_1 \hdots x_{n-1} \displaystyle\prod_{i \leq p < p+1<j}(x_{w^{-1}(i)}-y_j)(x_{w^{-1}(i)}+y_j)$ \\ \hline
\end{tabular} }
\end{center}
\caption{Theorem \ref{thm:formulas}:  Formulas for equivariant classes of closed $K$-orbits}
\label{tab:formulas}
\end{table}
\end{landscape}
\newpage

\begin{proof}
We give proofs of our formulas for some of the cases, omitting others which are very similar.  Before beginning, we
give here Table \ref{tab:roots-of-k}, which describes the roots of $K$ in each of our $7$ cases.  It appears here,
rather than in Section \ref{sec:specific-cases}, simply for the reader's ease of reference.
\begin{table}[!ht]
\begin{center}
\begin{tabular}{|l|l|}
\hline
		Case & $\Phi_K$ \\ \hline \hline
		(1) & $\{\pm(Y_i-Y_j) \mid 1 \leq i < j \leq p\} \cup \{\pm(Y_i-Y_j) \mid p < i < j \leq n\}$ \\ \hline
		(2) & $\{\pm (Y_i \pm Y_j) \mid 1 \leq i < j \leq p\} \cup \{\pm(Y_i \pm Y_j) \mid p < i < j \leq n\} \cup \{\pm Y_i \mid p < i \leq n\}$ \\ \hline
		(3) & $\{\pm 2Y_i \mid i=1,\hdots,n\} \cup \{\pm (Y_i \pm Y_j) \mid 1 \leq i<j \leq p\} \cup \{\pm (Y_i \pm Y_j) \mid p<i<j\leq n\}$ \\ \hline
		(4) & $\{\pm(Y_i - Y_j) \mid 1 \leq i < j \leq n\}$ \\ \hline
		(5) & $\{\pm(Y_i \pm Y_j) \mid 1 \leq i < j \leq p\} \cup \{\pm(Y_i \pm Y_j) \mid p < i < j \leq n\}$ \\ \hline
		(6) & $\{\pm(Y_i - Y_j) \mid 1 \leq i < j \leq n\}$ \\ \hline
		(7) & $\{\pm Y_i \mid i \neq p+1\} \cup \{\pm(Y_i \pm Y_j) \mid 1 \leq i < j \leq p\} \cup \{\pm(Y_i \pm Y_j) \mid p+1 < i < j \leq n \}$ \\ \hline
\end{tabular}
\end{center}
\caption{Root systems for $K$ in the various cases}
\label{tab:roots-of-k}
\end{table}

\begin{proof}[Case (1)]
Let $Q= K \cdot wB/B$ be a closed orbit, corresponding to a $(p,q)$-clan $\gamma$ consisting only of signs.  Recall that $w$ can be taken to be any permutation whose one-line notation places $1,\hdots,p$ in the positions of the $+$ signs of $\gamma$, and $p+1,\hdots,n$ in the remaining positions.

First, observe that the formula given is independent of the choice of $w$.  Indeed, any other $S$-fixed point
in $Q$ is of the form $w'w$ for some $w' \in W_K = S_p \times S_q$.  Since $w'$ preserves the sets $\{1,\hdots,p\}$
and $\{p+1,\hdots,n\}$, we see that 
\[ w'w(j) \leq p < w'w(i) \Leftrightarrow w(j) \leq p < w(i), \]
and so $l_p(w'w) = l_p(w)$.  Further, the set $\{w^{-1}(i) \mid i \leq p \}$ (that is, the set of indices on the
$x$'s in our putative formula) is clearly the same as $\{(w'w)^{-1}(i) \mid i \leq p \} =
\{(w^{-1}w'^{-1})(i) \mid i \leq p\}$, since $w'^{-1}$ permutes those $i$ which are less than or equal to $p$.

We now use Proposition \ref{prop:restriction-of-closed-orbit} to identify the restriction of $[Q]$ at each $S$-fixed point.  The set $\rho(w\Phi^+)$ is
\[ \rho(\{w \alpha \mid \alpha \in \Phi^+ \}) = \{ Y_{w(i)} - Y_{w(j)} \mid i < j \}. \]

Discarding roots of $K$ (cf. Table \ref{tab:roots-of-k}) from this set, we are left with precisely one of $\pm (Y_i - Y_j)$ for each $i,j$ with $i \leq p < j$.  The number of remaining roots of the form $-(Y_i-Y_j)$ is precisely $l_p(w)$.  Thus 
\[ [Q]|_w = F(Y) := (-1)^{l_p(w)} \displaystyle\prod_{i \leq p < j}(Y_i - Y_j). \]

(Note that because $l_p$ is constant on cosets $W_K w$, the restriction $[Q]|_w$ is actually the same at every $S$-fixed point $w \in Q$.)

So for any $u \in W$,
\[ [Q]|_u = 
\begin{cases}
	F(Y) & \text{ if $uB/B \in Q$}, \\
	0 & \text{ otherwise.}
\end{cases} \]
Recalling the precise definition of the restriction maps $i_u^*$ given in Proposition \ref{prop:restriction-maps}, we
see that we are looking for a polynomial $P$ in the $\xx$ and $\yy$ variables such that $P(w'w(Y),Y) = F(Y)$
if $w' \in W_K$, and $0$ otherwise.

So take $P$ to be the representative given in Table \ref{tab:formulas}.  It is straightforward to check that $P$ has
the required properties.  Indeed, for $w' \in W_K$, we see that 
\[ P(w'w(Y),Y) = (-1)^{l_p(w)} \displaystyle\prod_{i \leq p < j}(Y_{w'(i)} - Y_j), \]
and since $w'$ permutes $\{1,\hdots,p\}$, this is precisely $F(Y)$.

On the other hand, given $w' \notin W_K$,
\[ P(w'w(Y),Y) = (-1)^{l_p(w)} \displaystyle\prod_{i \leq p < j}(Y_{w'(i)} - Y_j) = 0, \]
since $w'$, not being an element of $W_K$, necessarily sends some $i \leq p$ to some $j > p$, causing the factor
$Y_{w'(i)} - Y_j$ to be equal to $0$.

We conclude that $P$ represents $[Q]$.
\end{proof}

\begin{proof}[Case (2)]
Note that here $K$ is disconnected.  Each closed $K$-orbit corresponds to a symmetric $(2p,2q+1)$-clan consisting only of $+$'s and $-$'s, and is a union of $2$ closed
$K^0 = SO(2p,\C) \times SO(2q+1,\C)$-orbits.  We shall obtain formulas for the equivariant classes of these individual
components, then add them to get a formula for each closed $K$-orbit.

We claim that for an $S$-fixed point $wB/B$, the class of the closed $K^0$-orbit $K^0 \cdot wB/B$ is represented by the polynomial
$(-1)^{\phi_p(w) + l_p(|w|)} P(\xx,\yy)$, where
\[ P(\xx,\yy) := \frac{1}{2} (x_{w^{-1}(1)} \hdots x_{w^{-1}(p)} + y_1 \hdots y_p) \displaystyle\prod_{i \leq p < j}(x_{w^{-1}(i)} - y_j)(x_{w^{-1}(i)} + y_j). \]

First, let us check that this formula is independent of the choice of $w$.  Recall from Section \ref{sec:specific-cases} the description of $W_{K^0}$ as the subgroup of $W$ consisting of signed permutations of $\{\pm 1,\hdots,\pm n\}$ which act separately on $\{\pm 1,\hdots, \pm p\}$ and $\{ \pm (p+1),\hdots, \pm n \}$, and which change an even number of signs on the first set.  Then the function
$\phi_p$ is constant modulo $2$ on right cosets $W_{K^0}w$, since elements of $W_{K^0}$ permute
$\{1,\hdots,p\}$ with an even number of sign changes.  Considering $l_p(|w|)$, note that if $w' = w_K w$
for $w_K \in W_{K^0}$, then $|w'| = |w_K| |w|$, and $|w_K|$ is an ordinary permutation of $\{1,\hdots,n\}$
which acts separately on $\{1,\hdots,p\}$ and $\{p+1,\hdots,n\}$.  This implies that $l_p(|w'|) = l_p(|w|)$.

Next, consider the term 
\[ x_{w^{-1}(1)} \hdots x_{w^{-1}(p)} + y_1 \hdots y_p. \]

Replacing $w$ by $w_K w$, we get
\[ x_{w^{-1}(w_K^{-1}(1))} \hdots x_{w^{-1}(w_K^{-1}(p))} + y_1 \hdots y_p = x_{w^{-1}(1)} \hdots x_{w^{-1}(p)} + y_1 \hdots y_p, \]
since $w_K$ permutes $\{1,\hdots,p\}$ with an even number of sign changes.  Finally, to see that the product 
\[ \displaystyle\prod_{i \leq p < j}(x_{w^{-1}(i)} - y_j)(x_{w^{-1}(i)} + y_j) \]
also does not depend on the choice of $w$, it is perhaps easiest to note that this expression is unchanged
if we replace $w$ by $|w|$.  This reduces matters to the same type of argument given in case (1).

With this established, we apply Proposition \ref{prop:restriction-of-closed-orbit} to compute the
restrictions $[Q]|_w$.  Recall our choice of $\Phi^+$ described in Section \ref{sec:specific-cases}.
Now, applying $w$ to positive roots of the form $X_i$, we obtain $X_{w(i)} = \pm X_j$
for $j=1,\hdots,n$.  Applying $w$ to $X_i \pm X_j$, we obtain, for each $k < l$, exactly one of
$\pm (X_k + X_l)$, and exactly one of $\pm (X_k - X_l)$.  Restricting to $\frs$ (i.e. replacing $X$'s
with $Y$'s) and eliminating roots of $K$ (cf. Table \ref{tab:roots-of-k}), we are left with $\pm Y_j$ with $j \leq p$, along with, for
each $k \leq p < l$, exactly one of $\pm (Y_k + Y_l)$, and exactly one of $\pm (Y_k - Y_l)$.

The number of weights of the form $\pm Y_j$ occurring with a negative sign is clearly $\phi_p(w)$.  It is
an easy argument to determine that the number of weights of the latter type occurring with a minus sign is
congruent modulo $2$ to $l_p(|w|)$.  So for any $S$-fixed point $w \in Q$,
\[ [Q]|_w = (-1)^{\phi_p(w)+l_p(|w|)} F(Y), \]
where
\[ F(Y) := Y_1 \hdots Y_p \displaystyle\prod_{i \leq p < j}(Y_i + Y_j)(Y_i - Y_j). \]

Thus we must prove that
\[ P(\sigma Y,Y) = 
\begin{cases}
	F(Y) & \text{ if $\sigma = w'w$ for some $w' \in W_K$}, \\
	0 & \text{ if $\sigma w^{-1} \notin W_K$.}
\end{cases}
\]

First, we establish this when $Q$ is the orbit containing the $S$-fixed point corresponding to the identity.
The general case follows easily.  Suppose first that $w \in W_{K^0}$.  Since $w$ permutes $\{1,\hdots,p\}$
with an even number of sign changes, we have
\[ Y_{w(1)} \hdots Y_{w(p)} = Y_1 \hdots Y_p. \]

Further, again because $w$ permutes $\{1,\hdots,p\}$ with an even number of sign changes, we see that
\[ \displaystyle\prod_{i \leq p < j}(Y_{w(i)} + Y_j)(Y_{w(i)} - Y_j) = \displaystyle\prod_{i \leq p < j}(Y_i + Y_j)(Y_i - Y_j). \]

This says that
\[ P(wY,Y) = Y_1 \hdots Y_p \displaystyle\prod_{i \leq p < j}(Y_i + Y_j)(Y_i - Y_j) \]
for $w \in W_{K^0}$.

Now, suppose $w \notin W_{K^0}$.  Then one of two things is true:  Either $w$ is separately a signed
permutation of $\{1,\hdots,p\}$ and $\{p+1,\hdots,n\}$, but permutes $\{1,\hdots,p\}$ with an \textit{odd}
number of sign changes, or $w$ is not separately a signed permutation of $\{1,\hdots,p\}$ and
$\{p+1,\hdots,n\}$, in which case $w$ sends some $i \leq p$ to $\pm j$ for some $j > p$.  In the former
case, we see that 
\[ Y_{w(1)} \hdots Y_{w(p)} + Y_1 \hdots Y_p = 0, \]
while in the latter case, either $Y_{w(i)} + Y_j = 0$, or $Y_{w(i)} - Y_j = 0$, whence
\[ \displaystyle\prod_{i \leq p < j}(Y_{w(i)} + Y_j)(Y_{w(i)} - Y_j) = 0. \]

Together, these two facts say that 
\[ P(wY,Y) = 0 \]
whenever $w \notin W_{K^0}$.  We conclude that $P(x,y)$ represents $[Q]$.

Now, suppose that $\wt{Q}$ is another closed $K^0$-orbit, containing the $S$-fixed point $w \notin W_{K^0}$.
All $S$-fixed points contained in $\wt{Q}$ are then of the form $w'w$ for $w' \in W_{K^0}$.  So for any
$w'w \in \wt{Q}$, we have
\[ P(w'wY,Y) = \frac{1}{2} (Y_{w'(1)} \hdots Y_{w'(p)} + Y_1 \hdots Y_p) \displaystyle\prod_{i \leq p < j}(Y_{w'(i)} - Y_j)(Y_{w'(i)} + Y_j) = \]
\[ Y_1 \hdots Y_p \displaystyle\prod_{i \leq p < j}(Y_i + Y_j)(Y_i - Y_j), \]
by our previous argument, since $w' \in W_{K^0}$.  Noting that this is precisely what $P(w'wY,Y)$ is to be
up to sign, and noting that we have corrected the sign by the appropriate factor of
$(-1)^{\phi_p(w) + l_p(|w|)}$ in our putative formula, we see that it restricts correctly at $S$-fixed
points contained in $\wt{Q}$.

On the other hand, for any $S$-fixed point $\wt{w}$ \textit{not} contained in $\wt{Q}$, we may write
$\widetilde{w} = w'w$ for $w' \notin W_{K^0}$.  Then
\[ P(\wt{w}Y,Y) = P(w'wY,Y) = \]
\[ \frac{1}{2} (Y_{w'(1)} \hdots Y_{w'(p)} + Y_1 \hdots Y_p) \displaystyle\prod_{i \leq p < j}(Y_{w'(i)} - Y_j)(Y_{w'(i)} + Y_j) = 0, \]
again by our previous argument, since $w' \notin W_{K^0}$.

With formulas for the closed $K^0$-orbits now in hand, we recall that each closed $K$-orbit splits up as a union of $2$ distinct closed $K^0$-orbits.  One can pass from one component of a closed $K$-orbit to the other by multiplication by an element $\pi$ of $W_K$ which is not in $W_{K^0}$, i.e. by a signed permutation of $\{\pm 1,\hdots, \pm n\}$ which acts separately on $\{\pm 1,\hdots, \pm p\}$ and $\{\pm (p+1),\hdots,\pm n\}$ and changes an odd number of signs on the first set.  One natural choice of $\pi$ is $\overline{1}2 \hdots n$.

So $Q_{\gamma} = K^0 \cdot wB/B \cup K^0 \cdot \pi wB/B$, which implies that $[Q_{\gamma}] = [K^0 \cdot wB/B] + [K^0 \cdot \pi wB/B]$.
Using the formulas obtained above for the closed $K^0$-orbits, this sum simplifies to the
formula given in Table \ref{tab:formulas} when one makes the following easy observations:
\begin{enumerate}
	\item $\phi_p(\pi w) = \phi_p(w) + 1$;
	\item $l_p(|w|) = l_p(|\pi w|)$;
	\item $x_{w^{-1}(1)} \hdots x_{w^{-1}(p)}$ = $-x_{(\pi w)^{-1}(1)} \hdots x_{(\pi w)^{-1}(p)}$;
	\item $(-1)^{\phi_p(w)} x_{w^{-1}(1)} \hdots x_{w^{-1}(p)}$ = $x_{|w|^{-1}(1)} \hdots x_{|w|^{-1}(p)}$.
\end{enumerate}
\end{proof}

The proofs of the correctness of the formulas in cases (3) and (5) is very similar to that for case (2), so we omit them.

\begin{proof}[Case (4)]
Let us again start by noting that the formula for the class of $Q = K \cdot wB/B$ given in Table \ref{tab:formulas} is independent of the choice of fixed point $w$ representing $Q$.  First, since fixed points contained in $Q$ are all $W_K$ translates of $w$, and since elements of $W_K$ are signed permutations which change no signs, all the fixed points in $Q$ correspond to $W$-elements having the same ``sign pattern".  By this, we mean that the set $\text{Neg}(w)$ is independent of the choice of $w$.  As an example, when $n = 2$, the $4$ closed $K$-orbits on $G/B$ contain $S$-fixed points $\{12,21\}$, $\{\overline{1}2,\overline{2}1\}$, $\{1\overline{2},2\overline{1}\}$, and $\{\overline{1} \overline{2}, \overline{2} \overline{1}\}$.

It follows that the functions $\psi, \sigma$ are constant on $W_K w$, so that $(-1)^{\psi(w) + \sigma(w)}$ is independent of the choice of $w$.  It is also easy to see that $\Delta_n(\xx,\yy,w)$ is independent of this choice.  Indeed, replacing $w$ by $w'w$ for $w' \in W_K$, each $c_k$ becomes
\[ e_k(x_{w^{-1}(w'^{-1}(1))},\hdots,x_{w^{-1}(w'^{-1}(n))}) + e_k(y_1,\hdots,y_n). \]
Because $w'$ is just an ordinary permutation of $\{1,\hdots,n\}$, the effect is simply to permute the $x_{w^{-1}(i)}$, and because $e_k$ is invariant under permutation of the inputs, each $c_k$ is unchanged.

With this established, let us apply Proposition \ref{prop:restriction-of-closed-orbit} to compute the restriction
$[Q]|_w$.  Applying $w$ to positive roots of the form $2X_i$, we get weights of the form
$2Y_{w(i)} = \pm 2Y_j$.  The number of such weights occurring with a minus sign is $\psi(w)$.  Note that none of
these weights are roots of $K$ (cf. Table \ref{tab:roots-of-k}).

Applying $w$ to positive roots of the form $X_i \pm X_j$ ($i<j$), we get weights of the form
$Y_{w(i)} \pm Y_{w(j)}$, and these two weights together are of the form $\pm Y_k \pm Y_l$, $\pm Y_k \mp Y_l$, for
some $k,l$.  Those of the latter form $\pm Y_k \mp Y_l$ are roots of $K$ (cf. Table \ref{tab:roots-of-k}), while those of the former are not.  We
eliminate the latter weights, and retain the former.  The number of roots surviving which are negative (i.e. of the
form $-Y_k - Y_l$) is precisely $\sigma(w)$.  To see this, note that if $w(i)$ is positive, then applying $w$ to any
pair of roots $X_i+X_j,X_i-X_j$ with $i<j$ is going to necessarily give a positive root of the
form $Y_k+Y_l$, where $k=w(i)$, $l = |w(j)|$.  If $w(i)$ is negative, then applying $w$ to any such pair will
necessarily give a negative root of the form $-Y_k-Y_l$.  For any fixed $i$, the number of pairs $\{X_i \pm X_j\}$
with $i<j$ is precisely $n-i$.  So for each $i$ with $w(i)$ negative, $n-i$ negative roots occur, for a total of
$\sigma(w)$ negative roots.

All of this leads to the conclusion that, for any $S$-fixed point $w \in Q$, we have
\[ [Q]|_w = F(Y) := (-1)^{\psi(w)+\sigma(w)} 2^n Y_1 \hdots Y_n \displaystyle\prod_{i < j}(Y_i + Y_j). \]

So letting $P$ be the polynomial given in Table \ref{tab:formulas}, the claim is thus that $P(uY,Y)$ is $F(Y)$ if
$uw^{-1} \in W_K$, and is $0$ otherwise.  If $uw^{-1} \in W_K$, then it is an ordinary permutation, with
no sign changes, whereas if $uw^{-1} \notin W_K$, then it is a signed permutation with at least one sign change.
Noting that $P(uY,Y)$ is simply $\Delta_n(uw^{-1}Y,Y,1)$, the claim that $P(\xx,\yy)$ represents $[Q]$ thus amounts
to the claim that $\Delta_n(\xx,\yy,id)$ has the following two properties:
\begin{enumerate}
	\item It is invariant under permutations of the $x_i$ and $y_i$.
	\item If $\epsilon_i = \pm 1$, then
	\[ \Delta_n((\epsilon_1Y_1,\hdots,\epsilon_nY_n),(Y_1,\hdots,Y_n),id) \]
	is zero unless all $\epsilon_i$ are equal to $1$, in which case it is equal to 
	\[ 2^n Y_1 \hdots Y_n \displaystyle\prod_{i < j}(Y_i + Y_j). \]
\end{enumerate}

That $\Delta_n(\xx,\yy,id)$ has these properties is proved directly in \cite[\S 3]{Fulton-96_1}.
\end{proof}

The proof of the correctness of the formulas in case (6) is very similar to that for case (4).  It again relies on
properties of the determinants in question which are established in \cite{Fulton-96_1}.  Because the argument is so
similar, we omit the details.

\begin{proof}[Case (7)]
We start by noting that in this case, the formula for the class of $K \cdot wB/B$ given in Table \ref{tab:formulas}
is \textit{not} independent of the choice of $w$ contained in the orbit.  Indeed, the formula given there
assumes $w$ to be the standard representative of the orbit (cf. Section \ref{sec:case-7} for the definition).  In what follows, we always assume $w$ to be the standard representative.

Recalling our labelling conventions for the $X_i$ and $Y_i$ coordinate functions, and the corresponding definition of $\rho$ in this case, (cf. Section \ref{sec:specific-cases}), we once again apply Proposition \ref{prop:restriction-of-closed-orbit}. 

First, consider $\rho(w \Phi^+)$, the elements of $\frs^*$ obtained by first applying the standard representative $w$ to the positive roots, then restricting to $\frs$.  They are as follows:
\begin{itemize}
	\item $Y_i$ ($i \neq p+1$), with multiplicity $2$.  (One is the restriction of $w(X_i + X_n) = X_{w(i)} + X_{p+1}$, the other the restriction of $w(X_i-X_n) = X_{w(i)} - X_{p+1}$.)
	\item $Y_i + Y_j$ ($i<j$, $i,j\neq p+1$), with multiplicity $1$.
	\item For each $i<j$ with $i,j \neq p+1$, exactly one of $\pm(Y_i-Y_j)$, with multiplicity $1$.
\end{itemize}

Removing roots of $K$ (cf. Table \ref{tab:roots-of-k}), we are left with the following weights:
\begin{itemize}
	\item $Y_i$ ($i \neq p+1$), with multiplicity $1$.
	\item $Y_i + Y_j$ ($i \leq p < p+1 < j$), with multiplicity $1$.
	\item For each $i<j$ with $i \leq p < p+1 < j$, exactly one of $\pm(Y_i-Y_j)$, with multiplicity $1$.
\end{itemize}

Recall that $w$ is an honest permutation, with no sign changes.  This means that the only way to get a weight of the form $-(Y_i-Y_j)$ by the action of $w$ is to apply $w$ to some $X_k - X_l$ ($k<l$) with $w(k) > w(l)$, then restrict.  (Clearly, we want $k,l \neq n$.)  For this root to remain after discarding roots of $K$, it must be the case that $w(k) > p+1$, while $w(l) \leq p$.  Thus for each $k < n$ such that $w(k) > p+1$ (this says that $k \in I_{w,p}$), we count the number of $l$ with $k < l < n-1$ such that $w(l) \leq p$ (this says that $l \in C_{w,p}(k)$).  Adding up the total number of such pairs as we let $k$ range over $I_{w,p}$, we arrive at $\tau_p(w)$.  This says that the number of weights of the form $-(Y_i-Y_j)$ contained in $\rho(w \Phi^+) \setminus (\rho(w \Phi^+) \cap \Phi_K)$ is $\tau_p(w)$.
 
Now we consider the set $\rho(w'w \Phi^+) \setminus (\rho(w'w \Phi^+) \cap \Phi_K)$ with $w' \in W_K$, and compute
the restriction $[Q]|_{w'w}$ at an arbitrary $S$-fixed point.  Since the action of $w'$ on $\frt$ commutes with
restriction to $\frs$, and since $w'$ acts on the roots of $K$ (and hence also on $\rho(\Phi) \setminus \Phi_K$), we
can simply apply $w'$ to the set of weights described in the previous paragraph.  We temporarily forget that some of
those roots are of the form $-(Y_i-Y_j)$ ($i<j$), and add the sign of $(-1)^{\tau_p(w)}$ back in at the end.  So
consider the action of $w' \in W_K$ on the following set of weights, each with multiplicity $1$:
\begin{itemize}
	\item $Y_i$ ($i \neq p+1$)
	\item $Y_i \pm Y_j$ ($i \leq p < p+1 < j$)
\end{itemize}

Since $w'$ acts separately as signed permutations on $\{1,\hdots,p\}$ and $\{p+2,\hdots,n\}$, it clearly sends the set of weights $Y_i \pm Y_j$ to itself, except possibly with some sign changes.  We observe that the number of sign changes must be even.  Suppose first that $w'(Y_i + Y_j)$ is a negative root.  Then it is either of the form $-Y_k-Y_l$ or $-Y_k + Y_l$, with $k=|w(i)|$ and $l=|w(j)|$.  In the former case, $w'(Y_i - Y_j) = -Y_k + Y_l$, also a negative root.  In the latter, $w'(Y_i - Y_j) = -Y_k-Y_l$, again a negative root.  Likewise, if $w'(Y_i - Y_j)$ is a negative root of the form $-Y_k-Y_l$ or $-Y_k+Y_l$, then $w'(Y_i+Y_j)$ is also a negative root, equal to $-Y_k+Y_l$ in the former case, and $-Y_k-Y_l$ in the latter.  Thus the negative roots arising from the action of $w'$ on roots of the form $Y_i \pm Y_j$ occur in pairs.

Now consider roots of the form $Y_i$, $i \neq p+1$.  The action of $w'$ again preserves this set of roots, except possibly with some sign changes.  The number of sign changes could be either even or odd.  (Recall that $w'$ acts with any number of sign changes on $\{1,\hdots,p\}$ and $\{p+2,\hdots,n\}$, and sends $p+1$ either to itself or to $-(p+1)$, whichever ensures that the total number of sign changes for $w'$ is even.)

This discussion all adds up to the following.  The product of the weights $\rho(w'w \Phi^+) \setminus (\rho(w'w \Phi^+) \cap \Phi_K)$ is
\[ [Q]|_{w'w} = (-1)^{\tau_p(w) + \#(\text{Neg}(w') \setminus \{p+1\})} \displaystyle\prod_{i \neq p+1} Y_i \displaystyle\prod_{i \leq p < p+1 < j} (Y_i+Y_j)(Y_i-Y_j). \]

Thus we wish to prove that the polynomial $P$ given in Table \ref{tab:formulas} has the properties that $P(\rho(w'wX),Y)$ is equal to this restriction for all $w' \in W_K$, and that $P(\rho(w'wX),Y) = 0$ whenever $w' \notin W_K$.

Consider first the action of $w'w$ on $P$ for $w' \in W_K$.  Since $w$ sends the set $\{1,\hdots,n-1\}$ to the set $\{1,\hdots,p,p+2,\hdots,n\}$ with no sign changes, the action of $w'w$ on $X_1 \hdots X_{n-1}$ is clearly to send it to $(-1)^{\#(\text{Neg}(w') \setminus \{p+1\})} \displaystyle\prod_{i \neq p+1} X_i$.  Thus applying $w'w$ to $(-1)^{\tau_p(w)} x_1 \hdots x_{n-1}$, then restricting, gives us the portion
\[ (-1)^{\tau_p(w) + \#(\text{Neg}(w') \setminus \{p+1\})} \displaystyle\prod_{i \neq p+1} Y_i \]
of the required restriction.  Now consider the action of $w'w$ on the product 
\[ \displaystyle\prod_{i \leq p < p+1<j}(x_{w^{-1}(i)}+y_j)(x_{w^{-1}(i)} - y_j). \]
We get 
\[ \displaystyle\prod_{i \leq p < p+1<j}(Y_{w'^{-1}(i)}+Y_j)(Y_{w'^{-1}(i)} - Y_j). \]
Since $w'$ acts as a signed permutation on $\{1,\hdots,p\}$, this is clearly the same as
\[ \displaystyle\prod_{i \leq p < p+1<j}(Y_i+Y_j)(Y_i - Y_j), \]
giving us the remaining part of the required restriction.

Now, consider the action of $w'w$ on $P$ for $w' \notin W_K$.  Suppose first that $w'(p+1) \neq \pm(p+1)$.  Then
$w'(i) = \pm(p+1)$ for some $i \neq p+1$.  Let $j = w^{-1}(i)$.  Then the action of $w'w$ sends $x_j$ to $\pm X_{p+1}$,
which then restricts to zero.  Now suppose that $w'(p+1) = \pm(p+1)$.  Then since $w' \notin W_K$, $w'$ must send
some $i \leq p$ to $\pm j$ for some $j > p+1$.  If it sends $i$ to $j$, then $w'w$ applied to the term
$x_{w^{-1}(i)}-y_j$ is zero.  If it sends $i$ to $-j$, then $w'w$ applied to the term $x_{w^{-1}(i)}+y_j$ is zero.
This shows that $P(\rho(w'wX),Y) = 0$ for $w' \notin W_K$, completing the proof.
\end{proof}

This concludes the proof of Theorem \ref{thm:formulas}.
\end{proof}

\subsection{Examples}
For each of our cases, an example calculation is given in the Appendix, in the form of the weak order graph together with a table of formulas.  The formulas for closed orbits are those of Theorem \ref{thm:formulas}, while the others were obtained from these using divided difference operators according to the weak order. 

We remark that there are choices involved in making these calculations, since for any given orbit closure we may have a choice of multiple closed orbits from which to start the recursion.  Additionally, there are in general multiple paths connecting a given closed orbit to any given orbit closure, many of which may correspond to different divided difference operators.  Thus there is a question of well-definedness of these polynomials.  We make no general claim here that the representatives of the classes of closed orbits given in Theorem \ref{thm:formulas} gives rise to a well-defined family of polynomial representatives for the classes of all orbit closures.  However, at least for the examples of the Appendix, we have verified that all possible choices do in fact lead to the same polynomial representatives.

\subsection{Cases approachable by other means}
While we have used equivariant localization to determine representatives for the classes of closed orbits in all cases, we remark that in some of our cases, representatives can be determined by other means.

Indeed, in three of our cases, namely (1), (4), and (6), it is known that a number of the orbit closures, including all of the closed orbits, are Richardson varieties, i.e. intersections of Schubert varieties with opposite Schubert varieties.  The combinatorial translation between parameters for the orbit closures and for the corresponding Richardson varieties is spelled out explicitly in \cite{Wyser-11b,Wyser-12b}.  Using this information, one can obtain representatives for the equivariant classes of the closed orbits by simply multiplying representatives for the equivariant classes of the two Schubert varieties.

In type $A$, the \textit{double Schubert polynomials} are widely accepted as the preferred  representatives of
equivariant Schubert classes.  In case (1), the particular system of representatives for classes of $K$-orbit closures
that one obtains by taking the appropriate products of double Schubert polynomials to represent the classes of closed
orbits is studied in \cite{Wyser-Yong-13}.

In the other classical types, no one particular family of equivariant representatives is universally agreed upon as the preferred one, but various representatives are known.  The reader may consult \cite{Fulton-96_1,Fulton-96_2,Pragacz,Pragacz-Ratajski,Kresch-Tamvakis,Ikeda-Mihalcea-Naruse} and references therein.

Although the representatives obtained by taking products of Schubert classes are quite geometrically natural, verifying correctness via localization is more straightforward for our representatives.  Our representatives are also algebraically nice and easy to describe explicitly, whereas double Schubert polynomials are defined recursively, using divided difference operators.  On the other hand, we aren't aware of any sense in which our representatives are geometrically natural.

We also mention that a formula of M. Brion describes how to write the ordinary cohomology class of an orbit closure as a weighted sum of Schubert classes.  The formula is in terms of a sum over weighted paths in the weak order graph, with each path weighted by a power of $2$ according to how many dashed edges it contains.  Using this formula (along with our knowledge of the weak order in the various cases), one can in principle give a polynomial representative for the ordinary class of any orbit closure, by simply replacing the Schubert classes by Schubert polynomials.  In \cite{Wyser-Yong-13}, it is shown that these are precisely the representatives that one gets if one starts with products of double Schubert polynomials, then specializes from equivariant to ordinary cohomology.

Brion's formula applies only in the non-equivariant case.  If one specializes the equivariant formulas of Theorem \ref{thm:formulas} to ordinary cohomology (by setting all $\yy$-variables to $0$), one obtains representatives of the ordinary classes of $K$-orbit closures.  They are typically different from the sums of Schubert polynomials one obtains using Brion's formula.

\section{Connection to degeneracy loci}\label{sec:deg-loci}
In this section, we describe one application of the formulas obtained in the previous section, realizing the
$K$-orbit closures as universal degeneracy loci of a certain type determined by $K$.  We describe a translation
between our formulas for equivariant fundamental classes of $K$-orbit closures and Chern class formulas for the
fundamental classes of such degeneracy loci.

\subsection{Overview}\label{sec:deg-loci-general}
Let $E:=EG$ be a contractible space with a free action of $G$, and
hence, by restriction, a free action of any subgroup of $G$ (e.g. $T$, $S$, $B$, $K$).  Let $BG = E/G$ be a
classifying space for $G$, and similarly define $BK = E/K$, $BB = E/B$, etc.

Recall that $H_K^*(G/B)$ is naturally a subring of $H_S^*(G/B)$, the subring of $W_K$-invariants \cite{Brion-98_i}.
The $S$-equivariant classes that we have computed using localization and divided difference operators
are in fact elements of this subring.  Now the $K$-equivariant cohomology $H_K^*(G/B)$ is, by definition,
$H^*(E \times^K G/B)$, while the space $E \times^K G/B$ is naturally isomorphic to the space $BK \times_{BG} BB$.  

Given a smooth complex variety $X$ and a complex rank $n$ vector bundle $V \rightarrow X$ with certain presumed
additional structures, we get a map 
\[ X \stackrel{\phi}{\longrightarrow} BK \times_{BG} BB. \]
These additional structures amount to two separate lifts of the classifying map $X \rightarrow BG$ for
$V$, one to $BB$, and the other to $BK$.  In type $A$, the additional structure corresponding to a lift of the 
classifying map to $BB$ is well-known to be a complete flag of subbundles of $V$.  The structure corresponding to a
lift to $BK$ depends, of course, on the particular $K$ we are dealing with.  For example, in the cases of
\cite{Wyser-13-TG}, where $K$ was $SO(n,\C)$ or $Sp(2n,\C)$, the additional structure was a symmetric (resp.
skew-symmetric) non-degenerate bilinear form taking values in the trivial bundle.

Here, in case (1), the additional structure required is a splitting of $V$ as a direct sum of two subbundles of ranks
$p$ and $q$.  As described in Section \ref{sec:deg-loci-other-cases}, given the close relationship of cases (2)-(7)
to case (1), in those cases the appropriate structure is again a splitting of $V$, but with additional required
properties.

Given such a setup, and a clan $\gamma$, we may consider the subvariety $D_{\gamma} \subseteq X$ which is the preimage
of the $K$-orbit closure $Y_{\gamma}$ under $\phi$.  (More precisely, $D_{\gamma}$ is the preimage under $\phi$ of the
isomorphic image of $E \times^K Y_{\gamma} \subseteq E \times^K G/B$ in the space $BB \times_{BG} BK$.)

Describing $D_{\gamma}$ explicitly requires an explicit linear algebraic description of the points of $Y_{\gamma}$.
For case (1), we give this as Theorem \ref{thm:orbit-closures}.  Essentially, $Y_{\gamma}$ is described
as the set of all flags which are in a certain position (prescribed in a precise way by $\gamma$) relative to the
standard splitting of $\C^n$ as $\left\langle e_1,\hdots,e_p \right\rangle \oplus 
\left\langle e_{p+1},\hdots,e_n \right\rangle$.  The
corresponding degeneracy locus $D_{\gamma}$ is then the set of all points $x \in X$ over which the fiber of the flag
on $V$ over $x$ is in this same position relative to the fiber of the splitting over $x$, with the ``position'' of a
flag relative to a splitting being defined in precisely the same way as in the $K$-orbit setting.

The various bundles on $X$ can be realized as pullbacks by $\phi$ of tautological bundles on the universal space, in such
a way that their Chern classes are pullbacks of $S$-equivariant classes represented by our $\xx$ and $\yy$
variables (or perhaps by polynomials in these classes).  Making this translation explicit, and having described the
points of $D_{\gamma}$ explicitly, if we assume that
\begin{equation}\label{eqn:pullback}
	[D_{\gamma}] = [\phi^{-1}(Y_{\gamma})] = \phi^*([Y_{\gamma}]),
\end{equation}
then our equivariant formula for $[Y_{\gamma}]$ gives us, in the end, a formula for $[D_{\gamma}] \in H^*(X)$ in
terms of the Chern classes of the bundles involved.  Equation \eqref{eqn:pullback} holds, for instance, if $\phi$ is
a smooth morphism.  Alternatively, \cite[\S B.3, Lemma 5]{Fulton-YoungTableaux} gives a more general sufficient
condition to guarantee \eqref{eqn:pullback}, namely that $D_{\gamma}$ has the expected codimension, and that there
is an open neighborhood $U$ of a smooth point of $Y_{\gamma}$, defined by equations $h_1,\hdots,h_s$, such that
$\phi^{-1}(U)$ is defined inside $D_{\gamma}$ by equations $h_1 \circ \phi,\hdots,h_s \circ \phi$.

\begin{remark}
In case (1), the only one which we actually make fully explicit, we can avoid assuming that $k=\C$, and that $X$ is
smooth, and work in the Chow groups $A_*(X)$ instead.  In this setting, $D_{\gamma}$ is given an implicit subscheme structure
by virtue of being the preimage of $Y_{\gamma}$ under $\phi$.  Taking this point of view, we need only assume that $k$
is algebraically closed of characteristic not equal to $2$, that $X$ is Cohen-Macaulay, and that $D_{\gamma}$ is of the
expected codimension.  Indeed, the argument is identical to that given in the proof of
\cite[Theorem 8.2, (d)]{Fulton-92}.  This requires knowing that all $K$-orbit closures for this case are Cohen-Macaulay,
which follows from the aforementioned fact that the weak order graph in this case contains only solid edges.
A result of Brion \cite{Brion-03} implies that in such cases, all $K$-orbit closures are Cohen-Macaulay.

In principle, this line of argument applies also to cases (3) and (6).  However, in these cases, we do not
give an explicit description of the corresponding degeneracy loci, since we do not know a set-theoretic
description of the $K$-orbit closures in these cases.

For the remaining cases, $K$-orbit closures need not be Cohen-Macaulay.  In such cases, one can still work in the Chow
groups, and \eqref{eqn:pullback} holds (essentially by definition) provided that $\phi$ is flat of some fixed
relative dimension.

Whichever cohomology theory one prefers, equation \eqref{eqn:pullback} holds in the
``generic'' situation, and should be thought of morally as an insistence that the given additional structures on $V$
(the flag and the splitting) are in suitably general position with respect to one another.
\qed
\end{remark}

\subsection{Set-theoretic descriptions of orbit closures}\label{sec:explicit-orbit-closures}
\subsubsection{Case (1)}
As indicated in the previous section, to explicitly describe the types of degeneracy loci for which the $K$-orbits are ``universal" requires an explicit set-theoretic description of the $K$-orbit closures.  In this section, we provide such a description for the type $A$ pair of case (1).

For any $(p,q)$-clan $\gamma=c_1 \hdots c_n$, and for any $i,j$ with $1 \leq i<j \leq n$, define the following quantities:
\begin{enumerate}
	\item $\gamma(i; +) = $ the total number of plus signs and pairs of equal natural numbers occurring among $c_1 \hdots c_i$;
	\item $\gamma(i; -) = $ the total number of minus signs and pairs of equal natural numbers occurring among $c_1 \hdots c_i$; and
	\item $\gamma(i; j) = $ the number of pairs of equal natural numbers $c_s = c_t \in \N$ with $s \leq i < j < t$.
\end{enumerate}

We first recall the following explicit description of the $K$-orbits themselves, due to Yamamoto \cite{Yamamoto-97}.
Let $\gamma$ be a $(p,q)$-clan, with $Q_{\gamma}$ the corresponding $K$-orbit.  Let $E_p$ denote the linear span
$\left\langle e_1,\hdots,e_p \right\rangle$ of the first $p$ standard
basis vectors.  Let $\widetilde{E_q}$ denote the linear span $\left\langle e_{p+1},\hdots,e_n \right\rangle$ of the
\textit{last $q$} standard basis vectors.  Let $\pi$ denote the projection from $\C^n$ onto the subspace $E_p$.
\begin{theorem}\label{thm:orbits}
With notation as above, $Q_{\gamma}$ is precisely the set of flags $F_{\bullet}$ satisfying the following conditions
for all $i<j$: 
\begin{enumerate}
	\item $\dim(F_i \cap E_p) = \gamma(i;+)$
	\item $\dim(F_i \cap \widetilde{E_q}) = \gamma(i;-)$
	\item $\dim(\pi(F_i) + F_j) = j + \gamma(i;j)$
\end{enumerate}
\end{theorem}

The following theorem describes the (strong) Bruhat order on $K$-orbits explicitly, allowing us to give a similarly explicit description of $Y_{\gamma}$, the closure of $Q_{\gamma}$.  Loosely, the theorem says that, as in the case of type $A$ Schubert varieties, we
pass from the description of an orbit to that of its closure by changing equalities to inequalities.
\begin{theorem}\label{thm:orbit-closures}
Given two clans $\gamma$ and $\tau$, $Y_{\tau} \subseteq Y_{\gamma}$ if and only if
\begin{enumerate}
	\item $\tau(i;+) \geq \gamma(i;+)$ for all $i$;
	\item $\tau(i;-) \geq \gamma(i;-)$ for all $i$; and
	\item $\tau(i;j) \leq \gamma(i;j)$ for all $i<j$.
\end{enumerate}

From this, it follows that $Y_{\gamma}$ consists precisely of those flags $F_{\bullet}$ satisfying the following
conditions for all $i<j$:
\begin{enumerate}
	\item $\dim(F_i \cap E_p) \geq \gamma(i;+)$
	\item $\dim(F_i \cap \widetilde{E_q}) \geq \gamma(i;-)$
	\item $\dim(\pi(F_i) + F_j) \leq j + \gamma(i;j)$
\end{enumerate}
\end{theorem}

The proof of Theorem \ref{thm:orbit-closures} is a bit combinatorially involved, as it requires 
extensive case-by-case analysis in order to describe the covering relations in the putative Bruhat
order.  It will appear in a separate paper by the author, currently in preparation.

\begin{remark}
We remark that a special case of Theorem \ref{thm:orbit-closures} appears in the Ph.D. thesis of E.Y. Smirnov, cf. \cite[Theorem 3.10]{Smirnov-Thesis}.  Smirnov considers $B$-orbits on the product $Gr(p,n) \times Gr(q,n)$ of Grassmannians, which contains $G/K$ as a dense open subset.  Thus his results apply in particular to the $B$-orbits on $G/K$, which are in a natural bijection with $K$-orbits on $G/B$.  This bijection is known to preserve both the weak and strong Bruhat orders, so any results regarding the Bruhat order on one set of orbits naturally carries over to the other.

Smirnov uses a different parametrization of orbits from ours, so there is some combinatorial translation involved.  His parametrization of $B$-orbits on $G/K$ is by pairs of Young diagrams, each marked with dots in a certain way so as to encode a particular involution in $S_n$.  (This parametrization is described in detail in \cite{Smirnov-Thesis}, as well as in the published article \cite{Smirnov-08}.)

Smirnov's result describes the Bruhat order on $B$-orbits contained in a given $B \times B$-orbit on $Gr(p,n) \times Gr(q,n)$.
In terms of Smirnov's parameters, such $B$-orbits are all those having precisely the same pair of Young diagrams.
Converting from Smirnov's parameters to ours gives the result in our setting for all clans $\tau,\gamma$ such that
$\tau(i;\pm) = \gamma(i;\pm)$ for all $i$.  In Smirnov's setting, the result says that if both Young diagrams
coincide, then Bruhat comparability of the orbit closures is determined strictly by Bruhat comparability of the involutions
encoded by the dots.  In ours, it says that if $\tau(i;\pm) = \gamma(i;\pm)$ for all $i$ (i.e. if $\tau$ and $\gamma$ have $+$'s, $-$'s, first occurrences, and second
occurrences in precisely the same positions), then Bruhat comparability of $Y_{\tau}$ and $Y_{\gamma}$ is determined strictly by Bruhat comparability
of the underlying involutions of $\tau$ and $\gamma$.  It is easy to see that this is precisely the statement of
Theorem \ref{thm:orbit-closures} in this special case.  Indeed, translated to Smirnov's parameters, Theorem \ref{thm:orbit-closures}
says more generally that two $B$-orbits on $G/K$ are Bruhat comparable if and only if the Young diagrams of one
contain those of the other, \textit{and} the involutions encoded by the dots are Bruhat comparable. 
\qed
\end{remark}

\begin{remark}
Besides allowing us to explicate the connection with degeneracy loci here, Theorem \ref{thm:orbit-closures} may be of
independent interest.  For example, it allows one to write down explicit equations that cut out open affine subsets
of $K$-orbit closures.  Studying such affine neighborhoods should allow for insight into the singularities of the 
orbit closures.  This is of interest in its own right, and is also important in representation theory.
See \cite{Wyser-Yong-13,Woo-Wyser-14} for more details and for some partial results along these lines.
\qed
\end{remark}

\subsubsection{Other cases}
To explicitly connect the $K$-orbit closures in cases (2)-(6) to degeneracy loci, one needs a set-theoretic description of the orbit closures along the lines of Theorem \ref{thm:orbit-closures} for each case.  Since every $K$-orbit in these cases is the intersection of a $K'$-orbit on $X'$ with $X$ (with $X'$ a type $A$ flag variety and $K'$ some $S(GL(p,\C) \times GL(q,\C))$), it is clear that the set-theoretic description of any $K$-orbit is the same as that given in the statement of Theorem \ref{thm:orbits}, except that we restrict attention to isotropic or Lagrangian flags meeting the appropriate linear algebraic conditions.  (In type $D$, we should furthermore restrict our attention to isotropic flags in the appropriate ``family", i.e. those lying in the appropriate $SO(2n,\C)$-orbit on the variety of all isotropic flags.)    The obvious hope, then, is that each $K$-orbit \textit{closure} is the intersection of the corresponding $K'$-orbit \textit{closure} with $X$, so that a set-theoretic description of a $K$-orbit closure would be given by ``changing equalities to inequalities" as in Theorem \ref{thm:orbit-closures}, while again restricting our attention to the appropriate subset of isotropic or Lagrangian flags.

It is clear that this obvious guess at least contains the true $K$-orbit closure, but this containment need not be an equality.  Combinatorially, the issue can be framed as follows:  The set $K \backslash X$ is a subset of $K' \backslash X'$.  There are two possible partial orders one can put on the set $K \backslash X$.  On one hand, there is the Bruhat order, corresponding to containment of $K$-orbit closures on $X$.  On the other hand, there is the order on $K \backslash X$ induced by the Bruhat order on $K' \backslash X'$.  These two partial orders may \textit{a priori} be different.

As explained in \cite{Richardson-Springer-90}, given an explicit understanding of the weak order on $K \backslash X$, one can compute the full Bruhat order explicitly using a simple recursive algorithm.  Since we understand the weak order explicitly in all of our examples, one can use a computer to calculate the Bruhat order on $K \backslash X$ and compare it to the order induced by the Bruhat order on $K' \backslash X'$, then test whether these partial orders do in fact coincide.

The results of experiments of this type support the following conjecture.
\begin{conjecture}\label{conj:bruhat-order-other-types}
In cases (2)-(4), the Bruhat order on $K$-orbits coincides with the induced Bruhat order on the appropriate set of clans.  Thus for any $K$-orbit $Q = X \cap Q'$ for $Q'$ the corresponding $K'$-orbit on $X'$, we have that $\overline{Q} = X \cap \overline{Q'}$.  In particular, the description of $\overline{Q}$ as a set of flags is given by Theorem \ref{thm:orbit-closures}, and we simply restrict our attention to the set of flags meeting this description which lie in $X$ --- namely, isotropic flags in the type $B$ case, or Lagrangian flags in the type $C$ cases.
\end{conjecture}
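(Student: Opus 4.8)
The plan is to imitate, in the presence of the defining bilinear form, the two-part argument behind Theorem~\ref{thm:orbit-closures}. One implication is automatic on general topological grounds: writing $Q_\gamma = X \cap Q'_\gamma$ and $Q_\tau = X \cap Q'_\tau$, if $Q_\gamma$ lies in the closure of $Q_\tau$ inside $X$, then it lies in $\overline{Q'_\tau}$, so $Q'_\gamma$ meets the $K'$-stable set $\overline{Q'_\tau}$ and is therefore contained in it; hence the $K$-Bruhat order on $K\backslash X$ is always contained in the induced order, and $\overline{Q_\tau}$ is always contained in $X \cap \overline{Q'_\tau}$. (Here $X$ is closed in $X'$, so ``closure in $X$'' and ``closure in $X'$'' agree.) Thus the entire content of Conjecture~\ref{conj:bruhat-order-other-types} is the reverse inclusion $X \cap \overline{Q'_\tau} \subseteq \overline{Q_\tau}$, equivalently: every $K$-orbit $Q_\gamma$ with $\gamma \leq \tau$ in the combinatorial Bruhat order (which, by Theorem~\ref{thm:orbit-closures}, is exactly the induced order restricted to the relevant symmetric or skew-symmetric clans) actually lies in $\overline{Q_\tau}$.

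First I would restrict the combinatorial Bruhat order to the poset of symmetric (resp.\ skew-symmetric) clans parametrizing $K\backslash X$ and determine its covering relations, mirroring Theorem~\ref{thm:covers-combinatorial-order}. I expect every cover to be a \emph{symmetrized} version of one of the ten moves there: either one of those moves performed near an index $f$ together with its mirror image performed near the index $n+1-f$, when the two do not interfere, or a single move on a self-mirror pattern straddling the centre of the clan. The proof should run the case analysis of Theorem~\ref{thm:covers-combinatorial-order} almost verbatim: if $f$ is the first position at which $\gamma$ and $\tau$ differ, then by symmetry $n+1-f$ is the last, and one builds $\gamma'$ by performing the type~$A$ construction at $f$ and simultaneously its mirror at $n+1-f$; when $f$ and the secondary indices produced by the construction lie well away from the centre the two moves are independent, and the type~$A$ rank-number estimates, applied once on each half of the clan, yield a symmetric $\gamma'$ with $\gamma < \gamma' \leq \tau$. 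The finitely many remaining ``near-centre'' configurations, where the move at $f$ and the move at $n+1-f$ share positions, are checked by hand and account for the central moves.

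Next, for each such cover $\gamma \lessdot \gamma'$ I would exhibit an explicit degeneration inside $X$, exactly as in the proof following Theorem~\ref{thm:orbit-closures}: choose representatives $E_\bullet \in Q_\gamma$ and $F_\bullet \in Q_{\gamma'}$ produced by the Yamamoto-style algorithm, which for these symmetric clans are automatically isotropic or Lagrangian, and produce a one-parameter family $k(t) \in K$ with $\lim_{t\to 0} k(t)\cdot F_\bullet = E_\bullet$. Here the geometry should be \emph{easier} than the combinatorics: for a doubled move the required $k(t)$ is a single root-subgroup element of $\Sp(2n,\C)$ or $\O(2n+1,\C)$, whose support on a position and its mirror already realizes the pair of mirror-image elementary matrices used in the corresponding type~$A$ case; for a central move a single block unipotent or torus element of $K$ suffices. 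Because $k(t) \in G$ and $F_\bullet \in X$, the curve stays in $X$, and because $k(t) \in K'$ it stays in $Q'_{\gamma'}$, hence in $Q_{\gamma'} = X \cap Q'_{\gamma'}$; the limit is then computed by the same linear algebra as in type~$A$, carried out in a subspace stable under the form. Inducting along the Hasse diagram gives $Q_\gamma \subseteq \overline{Q_\tau}$ for all $\gamma \leq \tau$, whence $\overline{Q_\tau} = X \cap \overline{Q'_\tau}$, and the set-theoretic description follows by simply restricting the (in)equalities of Theorem~\ref{thm:orbit-closures} to flags lying in $X$.

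The hard part will be the combinatorial classification of covers: checking that the naive ``double move'' always stays inside the symmetric (or skew-symmetric) clans and always lands in the half-open interval $(\gamma,\tau]$, and correctly enumerating the near-centre overlap cases --- this is where a genuinely new case analysis is needed, since Incitti's description of covers for ordinary involutions (\cite{Incitti-04}) no longer applies directly to the centrally-straddling patterns. A secondary point, and the reason the conjecture is stated only for cases (2)--(4), is that in those cases the ambient isotropic or Lagrangian flag variety $X$ is irreducible, so $X \cap \overline{Q'_\tau}$ cannot acquire spurious components; in type~$D$ the variety of isotropic flags already splits into two $\SO(2n,\C)$-families and a single $K'$-orbit closure can meet both, so the clean ``restrict from type $A$'' picture breaks down there --- consistent with the observed failure of the naive guess in type $D$.
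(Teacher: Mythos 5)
The statement you are addressing is a \emph{conjecture} that the paper does not prove; the paper reports only that it has been verified by computer through rank $7$, and offers no argument. So there is no paper proof to compare against, and your proposal has to stand on its own. Read that way, you have correctly argued the easy inclusion $\overline{Q} \subseteq X \cap \overline{Q'}$ and laid out the natural strategy, but the two substantive steps remain genuinely unproved — and they are exactly where the content of the conjecture lives. First, you have not shown that every cover in the poset of symmetric (resp.\ skew-symmetric) clans is either a single central move or a disjoint mirror pair of the type-$A$ moves of Theorem~\ref{thm:covers-combinatorial-order}. Covers of a restricted sub-poset of a Bruhat-type order need not restrict to covers of the ambient poset: a cover $\gamma \lessdot \gamma'$ among symmetric clans could sit over a chain of length two or more in the full $(p,q)$-clan order with no intermediate symmetric clan, in which case no single ``symmetrized move'' produces $\gamma'$ from $\gamma$. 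You acknowledge this is where Incitti's results stop helping, but the case analysis required to rule it out (or to enumerate the exceptional covers) is precisely the missing piece. Second, even granting the classification of covers, the assertion that a curve $k(t) \in K = G \cap K'$ always exists realizing each cover is stated without construction. The degenerations in the proof of Theorem~\ref{thm:orbit-closures} use $k(t) \in K' = GL(p,\C)\times GL(q,\C)$; arranging a move and its mirror into a \emph{single} element of the smaller group $K$ is a nontrivial constraint, and the observed failure in type $D$ shows it is not automatic.

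Your proposed explanation for why type $D$ fails is also shaky. The flag variety $X = G/B$ is irreducible in every case (cases $B$, $C$, and $D$ alike), and $X \cap \overline{Q'_\tau}$ is always a well-defined closed subvariety of $X$; the existence of a second $\SO(2n,\C)$-family of isotropic flags does not by itself create ``spurious components'' of that intersection. A plausible mechanism for the $D$ failure is rather that some required $k(t)$ cannot be taken in $K$ — i.e., a failure at your second step — but the paper itself supplies only counterexamples and no mechanism, so this too is speculation. A genuine proof of the conjecture in types $B$ and $C$ would likely have to expose, as a by-product, exactly which degeneration cannot be built in type $D$, and your proposal does not yet get that far.
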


Conjecture \ref{conj:bruhat-order-other-types} has been verified for each of cases (2)-(4) through rank $7$.

Similar experimentation establishes that the analogous conjecture does \textit{not} hold in any of cases (5)-(7).

\begin{fact}\label{fact:type-d-conj-false}
In each of the type $D$ cases (5)-(7), the Bruhat order on $K \backslash X$ is strictly weaker than the order induced by the Bruhat order on $K' \backslash X'$.  Thus for a general $K$-orbit $Q = X \cap Q'$, $\overline{Q}$ is contained in, but is not equal to, $X \cap \overline{Q'}$.
\end{fact}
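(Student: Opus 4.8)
The plan is to establish the Fact by exhibiting, in each of the three type $D$ cases (5)--(7), an explicit pair of clans witnessing the strict inequality of the two orders. Recall first that, by Theorem \ref{thm:orbit-closures}, the Bruhat order on $K' \backslash X'$ (with $K' = GL(p,\C) \times GL(q,\C)$ and $X' = GL(p+q,\C)/B$) is precisely the combinatorial order $\leq$ on $(p,q)$-clans; hence the order on $K \backslash X$ \emph{induced} from $K' \backslash X'$ is just the restriction of $\leq$ to the appropriate set of symmetric or skew-symmetric clans. Thus it suffices to produce, for each type $D$ case, clans $\gamma$ and $\tau$ in that set with $\gamma < \tau$ in the combinatorial order --- so that $Q_\gamma = X \cap Q'_\gamma \subseteq X \cap \overline{Q'_\tau}$ --- but with $Q_\gamma \not\subseteq \overline{Q_\tau}$ in $X$; any such pair forces $\overline{Q_\tau} \subsetneq X \cap \overline{Q'_\tau}$.

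The next step is to certify the non-containment $Q_\gamma \not\subseteq \overline{Q_\tau}$ without circular reasoning. For this I would use the Matsuki--Oshima description of the weak order on $K \backslash X$ in each type $D$ case \cite{Matsuki-Oshima-90} (with details as in \cite{Wyser-Thesis}), together with the recursive procedure of Richardson--Springer exposited in \cite{McGovern-Trapa-09}, which reconstructs the full Bruhat order on $K \backslash X$ from the weak order alone. Running that procedure and verifying that $\gamma$ does not lie below $\tau$ in the resulting order supplies the certificate. In the smallest rank in which a counterexample occurs this is a by-hand computation; for larger ranks --- and in particular through rank $7$, matching the range over which Conjecture \ref{conj:bruhat-order-other-types} was tested --- it is a finite computer check.

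For a self-contained, hand-verifiable instance I would instead argue geometrically. Realize $Q'_\tau$ and a representative flag $F_\bullet$ of $Q_\gamma$ by the algorithm of \cite{Yamamoto-97}, taking care to choose $F_\bullet$ inside $X$ (isotropic, and in the correct $SO(2n,\C)$-family in the type $D$ setting, as dictated by the parametrizations of cases (5)--(7)). Checking the rank (in)equalities of Theorem \ref{thm:orbit-closures} shows $F_\bullet \in X \cap \overline{Q'_\tau}$. One then rules out $F_\bullet \in \overline{Q_\tau}$ by observing that every $K$-orbit contained in $\overline{Q_\tau}$ is obtained from $Q_\tau$ by a chain of weak-order descents, and that no such chain reaches $Q_\gamma$ --- information again read off the explicit weak order of \cite{Matsuki-Oshima-90}.

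The main obstacle here is conceptual rather than logical. The enumeration above settles the Fact in each individual case and rank, but it does not explain \emph{why} type $D$ differs from types $B$ and $C$, where Conjecture \ref{conj:bruhat-order-other-types} is expected to hold. I expect the true content to be that $Q_\tau = X \cap Q'_\tau$ need not be dense in $X \cap \overline{Q'_\tau}$ --- plausibly because the splitting of the variety of maximal isotropic subspaces into two $SO(2n,\C)$-families can make the latter intersection reducible, or otherwise force it to contain extra $K$-orbits --- so that $X \cap \overline{Q'_\tau}$ strictly contains $\overline{Q_\tau}$. Isolating and proving such a structural statement, as opposed to producing counterexamples one rank at a time, is where the real work would lie; it is, however, not needed for the Fact as stated, which is a purely computational assertion.
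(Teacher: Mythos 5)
Your proposal is correct and follows exactly the paper's approach: the Fact is certified by computing the full Bruhat order on $K \backslash X$ from the Matsuki--Oshima weak order via the Richardson--Springer recursion, comparing it with the restriction of the combinatorial clan order, and exhibiting a witnessing pair. The paper then supplies the explicit rank-$4$ witnesses your outline leaves to be computed (e.g., $1{+}{-}12{+}{-}2 < 12341234$ for $(SO(8,\C),GL(4,\C))$, and analogous pairs for $S(O(4){\times}O(4))$ and $S(O(5){\times}O(3))$); otherwise your logical structure and your caveat that the assertion is purely computational both match the paper.
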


We give the following examples:
\begin{itemize}
	\item In the case $(G,K) = (SO(8,\C),GL(4,\C))$, the clans $1+-12+-2$ and $12341234$ are related in the Bruhat order on $K' \backslash X'$ (where $K' = GL(4,\C) \times GL(4,\C)$), but are not related in the Bruhat order on $K \backslash X$.
	\item In the case $(G,K) = (SO(8,\C),S(O(4,\C) \times O(4,\C)))$, the clans $+-1122-+$ and $+-1212-+$ are related in the Bruhat order on $K' \backslash X'$ (with $K' = GL(4,\C) \times GL(4,\C)$), but are not related in the Bruhat order on $K \backslash X$.
	\item In the case $(G,K) = (SO(8,\C),S(O(5,\C) \times O(3,\C)))$, the clans $+121323+$ and $+123123+$ are related in the Bruhat order on $K' \backslash X'$ (with $K' = GL(5,\C) \times GL(3,\C)$), but are not related in the Bruhat order on $K \backslash X$.
\end{itemize}

\subsection{$K$-orbit closures as degeneracy loci:  Case (1)}\label{sec:deg-loci-case-1}
We now describe the degeneracy locus picture precisely in case (1).  Assume that $V \rightarrow X$ is a complex
vector bundle of rank $n$ over a smooth complex variety $X$, equipped with a flag of subbundles $F_{\bullet}$, and
a splitting as $V = V' \oplus V''$, with $V'$ and $V''$ being rank $p$ and $q$ subbundles, respectively.  Following
Theorem \ref{thm:orbit-closures}, let $\gamma$ be a $(p,q)$-clan, and over a point $x \in X$, let us say that the
flag $F_{\bullet}(x)$ and the splitting $V'(x) \oplus V''(x)$ are
\textbf{in relative position $\gamma$} if and only if
\begin{enumerate}
	\item $\dim(F_i(x) \cap V'(x)) \geq \gamma(i; +)$
	\item $\dim(F_i(x) \cap V''(x)) \geq \gamma(i; -)$
	\item $\dim(\pi_x(F_i(x)) + F_j(x)) \leq j + \gamma(i; j)$
\end{enumerate}
for all suitable $i,j$.  (In this context, $\pi_x: V(x) \rightarrow V'(x)$ denotes the projection onto the
$p$-dimensional subspace.)  Define
\[ D_{\gamma} := \{ x \in X \ \vert \ F_{\bullet}(x) \text{ and } V'(x) \oplus V''(x) \text{ are in relative position $\gamma$}\}. \]

Then $D_{\gamma}$ is a degeneracy locus which is the set-theoretic preimage of $E \times^K Y_{\gamma}$ under the map
$\phi$, in the notation of Section \ref{sec:deg-loci-general}.  The proof of this, involving standard structures on
the universal spaces, is essentially tautological, so we omit it.

We now describe how a formula for the equivariant class $[Y_{\gamma}]$ implies a formula for the fundamental class
$[D_{\gamma}] \in H^*(X)$ in terms of the Chern classes of $V'$, $V''$, and $F_i/F_{i-1}$ ($i = 1,\hdots,n$).  This
amounts to relating the $S$-equivariant classes in $H_S^*(G/B)$ represented by our $\xx$ and $\yy$-variables
to these Chern classes.

As recalled in Section \ref{sec:deg-loci-general}, the data of the flag $F_{\bullet}$ and the splitting of $V$ as
$V' \oplus V''$ gives us a map
\[ X \stackrel{\phi}{\longrightarrow} BK \times_{BG} BB. \]

It is explained in \cite{Wyser-13-TG} that the classes $x_i$ pull back through this map to $c_1(F_i/F_{i-1})$.  Essentially,
this is because the $x_i$ are the first Chern classes of the standard line bundles on $(G/B)_S = E \times^S (G/B)$,
while the subquotients $F_i/F_{i-1}$ are pullbacks of those same bundles to $X$.  Thus when translating our equivariant
formulas to a formula in $H^*(X)$, $x_i$ is interpreted as $c_1(F_i/F_{i-1})$.

Now, consider the $\yy$-variables.  We claim that the elementary symmetric polynomial $e_i(y_1,\hdots,y_p)$
($i=1,\hdots,p$) is identified with the Chern class $c_i(V')$, while the elementary symmetric
polynomial $e_i(y_{p+1},\hdots,y_n)$ ($i=1,\hdots,q$) is identified with the Chern class $c_i(V'')$.  We
sketch the argument as to why.

The universal space $(G/B)_K = E \times^K G/B$ carries two tautological bundles
$S'_K$ and $S''_K$ of ranks $p$ and $q$, respectively.  Explicitly, the bundle $S'_K$ is 
$(E \times^K \C \left\langle e_1,\hdots,e_p \right\rangle) \times G/B$, while the bundle $S''_K$ is 
$(E \times^K \C \left\langle e_{p+1},\hdots,e_n \right\rangle) \times G/B$.  When pulled back to 
$(G/B)_S$ via the natural map $(G/B)_S \rightarrow (G/B)_K$, both bundles split as direct sums of
line bundles.  $S'_K$ splits as a direct sum of $(E \times^S \C_{Y_i}) \times G/B$ for $i=1,\hdots,p$,
while $S''_K$ splits as a direct sum of $(E \times^S \C_{Y_i}) \times G/B$ for $i=p+1,\hdots,n$.  The
classes $y_i \in H_S^*(G/B)$ are precisely the first Chern classes of these line bundles.  So when we consider
$H_K^*(G/B)$ as a subring of $H_S^*(G/B)$, the Chern classes $c_1(S'_K),\hdots,c_p(S'_K)$ are identically
$e_1(y_1,\hdots,y_p),\hdots,e_p(y_1,\hdots,y_p)$, while the Chern classes $c_1(S''_K),\hdots,c_q(S''_K)$
are $e_1(y_{p+1},\hdots,y_n),\hdots,e_q(y_{p+1},\hdots,y_n)$.  Finally, $V'$ and $V''$ are, respectively, the pullbacks
of $S'_K$ and $S''_K$ through $\phi$.

We give an example.  Suppose we have a smooth complex variety $X$ and a rank $4$ vector bundle $V \rightarrow X$.
Suppose that $V$ splits as a direct sum of rank $2$ subbundles ($V = V' \oplus V''$), and suppose further that $V$ is
equipped with a complete flag of subbundles ($F_1 \subset F_2 \subset F_3 \subset V$).  Let $z_1,z_2,z_3,z_4$ be
$c_1(V')$, $c_2(V')$, $c_1(V'')$, $c_2(V'')$, respectively.  Let $x_i = c_1(F_i/F_{i-1})$ for $i = 1,2,3,4$.  For any
$(2,2)$-clan $\gamma$, we can use the data of Table \ref{tab:type-a-2-2} to determine Chern class formulas for the
class of any locus $D_{\gamma}$ in terms of the $z_i$ and $x_i$.

For instance, consider the clan $\gamma = ++--$.  The formula for $[Y_{\gamma}]$ (found in Table \ref{tab:type-a-2-2}
of the Appendix), when partially expanded and regrouped conveniently, gives
\[ (x_1^2 - x_1(y_3+y_4) + (y_3y_4))(x_2^2 - x_2(y_3+y_4) + (y_3y_4)). \]
We have explained that $y_3 + y_4$ is associated to $z_3$, while $y_3y_4$ is identified to $z_4$.  Thus the
conclusion is that
\[ [D_{++--}] = (x_1^2 - x_1z_3 + z_4)(x_2^2 - x_2z_3 + z_4). \]

\subsection{The degeneracy loci picture in cases (2)-(7)}\label{sec:deg-loci-other-cases}
In the other types, we should have a similar degeneracy locus story.  The vector bundles in the other types
carry an additional structure by virtue of being associated to principal $BSO(n,\C)$ or $BSp(2n,\C)$ bundles,
rather than just principal $BGL(n,\C)$ bundles.  Namely, they carry a non-degenerate quadratic form (types
$B$ and $D$) or skew-symmetric form (type $C$) taking values in the trivial line bundle, and (in types $BD$)
a trivialization of the determinant line bundle.  A lift
of the classifying map to $BB$ for these groups now amounts to a flag which is isotropic or
Lagrangian with respect to this form.

Moreover, since in each of these cases, $K$ is of the form $G \cap K'$ for some $K' = S(GL(p,\C) \times GL(q,\C))$, a
reduction of the structure group of the given bundle to $K$ clearly implies a reduction of
structure group to the corresponding $K'$.  This implies a splitting of the bundle into direct summands of the
appropriate ranks, as we have noted.  A further reduction of structure group to $K$ implies that this splitting has
some further property with respect to the form.  It is easy to see that these additional properties are that the
restriction of the form to each summand is non-degenerate in cases (2), (3), (5), and (7), and that the two rank $n$
summands are orthogonal complements with respect to the form in cases (4) and (6).

So given such a setup, as in the previous subsection, we can see that certain degeneracy loci
are parametrized by the $K$-orbit closures, and that our equivariant formulas for the orbit closures imply
Chern class formulas for the classes of such loci, with the $x$ and $y$-variables interpreted similarly.

Of course, a precise set-theoretic description of the degeneracy loci so parametrized by $K$-orbit
closures depends upon knowing a set-theoretic description of the orbit closures, which we
have only conjectured in types $BC$, and which we have no guess for in any of the type $D$ cases.  Presuming
Conjecture \ref{conj:bruhat-order-other-types} is correct, then the degeneracy loci for the type $B$ and $C$ cases
are described set-theoretically just as those in type $A$ are, with regard to the relative position of the flag and
the splitting; we simply assume the further structures on the bundle to be in place.

In the type $D$ cases, it's not clear precisely what degeneracy loci are being parametrized in general.
We remark that \textit{some} of the orbit closures in the type $D$ cases \textit{are} described set-theoretically
just as in the type $A$ case, so that some of the degeneracy loci in question are just as in the type $A$ case.
However, as we have noted, for other orbit closures, this description is wrong.  Degeneracy loci corresponding
to such orbit closures are described by the type $A$ conditions, plus some additional ones, and it is not clear
what the additional conditions are.

\appendix

\section*{Appendix:  Weak Order Graphs and Tables of Formulas in Examples}

\begin{figure}[h!]
	\caption{$(GL(4,\C),GL(2,\C) \times GL(2,\C))$}\label{fig:type-a-2-2}
	\centering
	\includegraphics[scale=0.5]{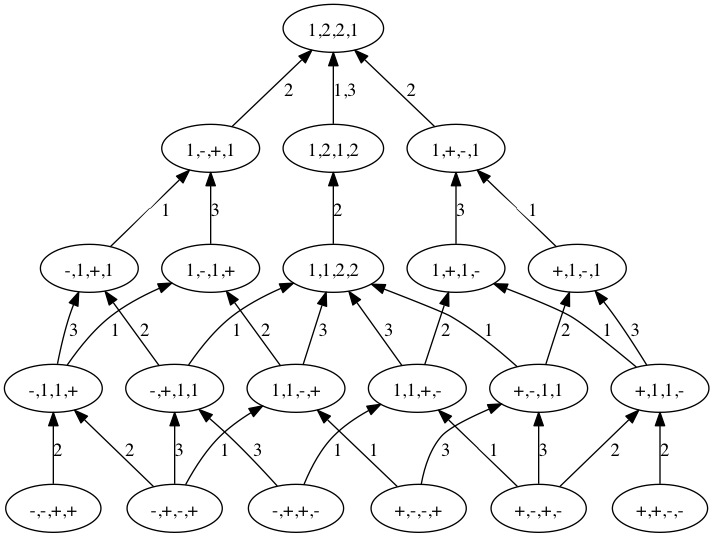}
\end{figure}

\begin{figure}[h!]
	\caption{$(SO(7,\C),S(O(4,\C) \times O(3,\C)))$}\label{fig:type-b-graph}
	\centering
	\includegraphics[scale=0.5]{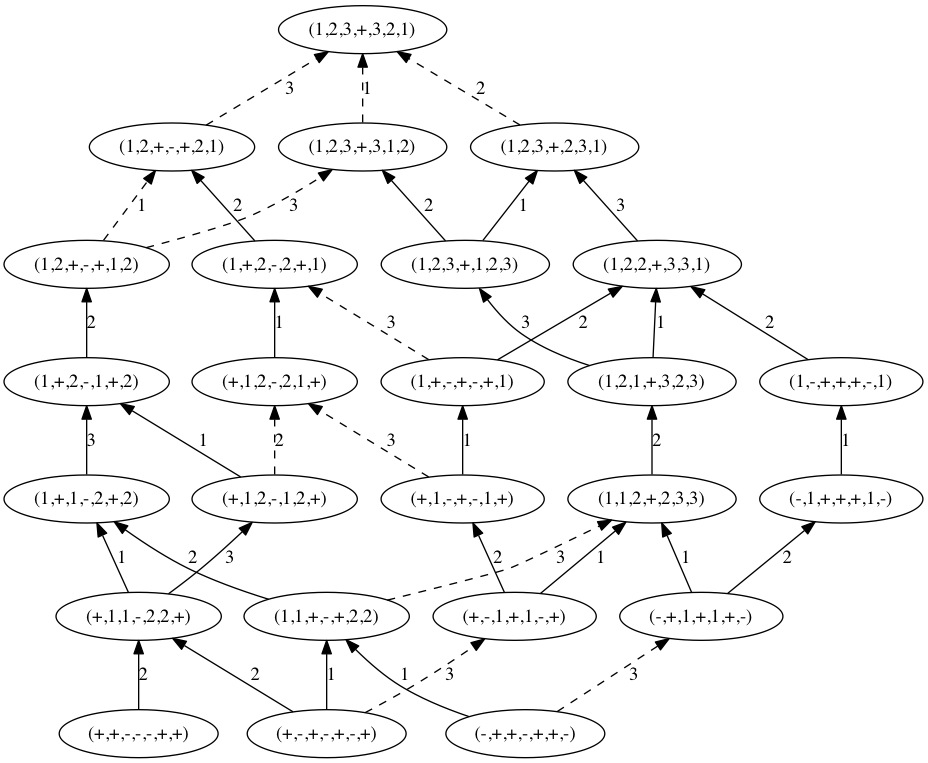}
\end{figure}

\begin{figure}[h!]
	\caption{$(Sp(6,\C),Sp(4,\C) \times Sp(2,\C))$}\label{fig:type-c-graph-1}
	\centering
	\includegraphics[scale=0.5]{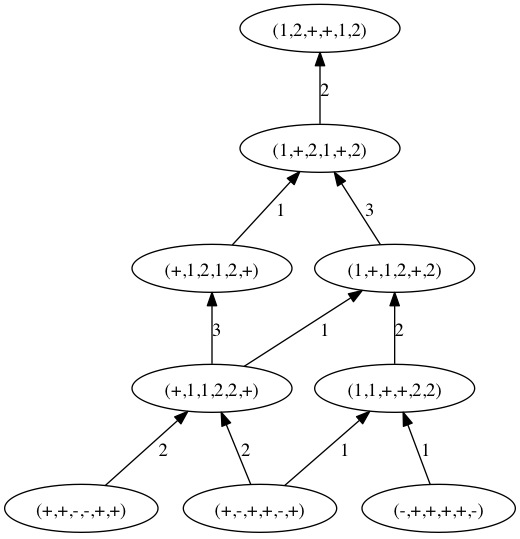}
\end{figure}

\begin{figure}[h!]
	\caption{$(Sp(4,\C),GL(2,\C))$}\label{fig:type-c-graph-2}
	\centering
	\includegraphics[scale=0.5]{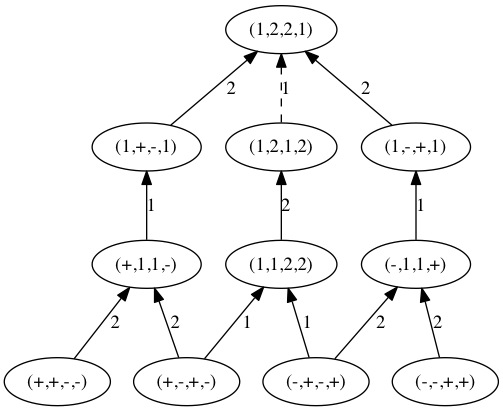}
\end{figure}

\begin{figure}[h!]
	\caption{$(SO(6,\C),S(O(4,\C) \times O(2,\C)))$}\label{fig:type-d-graph-1}
	\centering
	\includegraphics[scale=0.5]{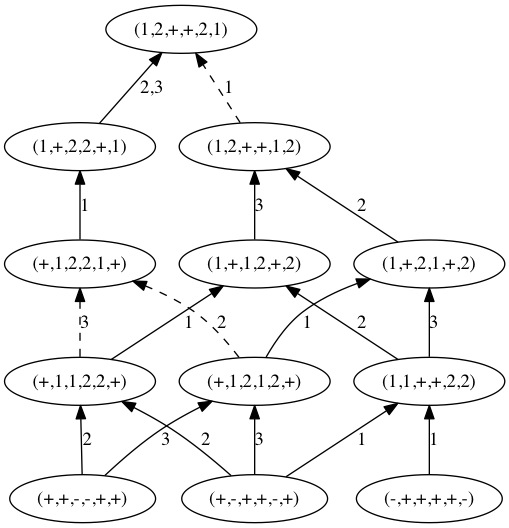}
\end{figure}

\begin{figure}[h!]
	\caption{$(SO(6,\C),GL(3,\C))$}\label{fig:type-d-graph-2}
	\centering
	\includegraphics[scale=0.5]{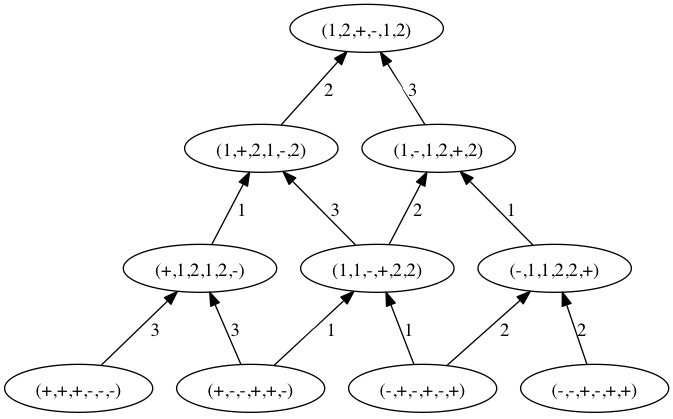}
\end{figure}

\begin{figure}[h!]
	\caption{$(SO(6,\C),S(O(3,\C) \times O(3,\C)))$}\label{fig:type-d-graph-3}
	\centering
	\includegraphics[scale=0.5]{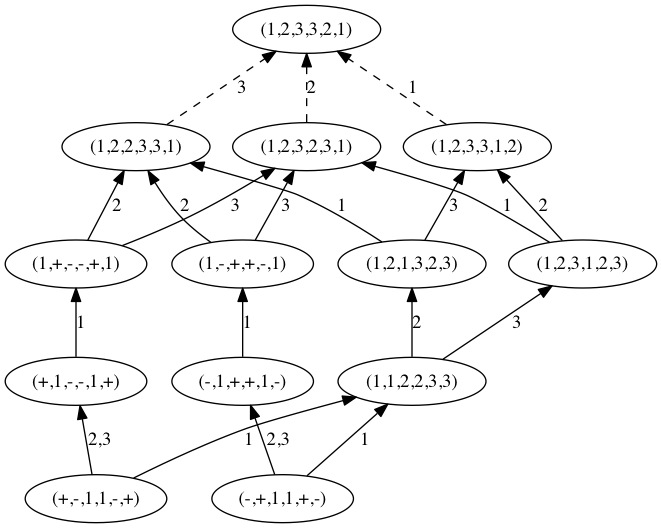}
\end{figure}

\begin{table}[h]
	\caption{Formulas for $(GL(4,\C),GL(2,\C) \times GL(2,\C))$}\label{tab:type-a-2-2}
	\resizebox{16cm}{!}{	
		\begin{tabular}{|l|l|}
			\hline
			$(2,2)$-clan $\gamma$ & Formula for $[Y_{\gamma}]$ \\ \hline
			$++--$ & $(x_1-y_3)(x_1-y_4)(x_2-y_3)(x_2-y_4)$ \\ \hline
			$+-+-$ & $-(x_1-y_3)(x_1-y_4)(x_3-y_3)(x_3-y_4)$ \\ \hline
			$+--+$ & $(x_1-y_3)(x_1-y_4)(x_4-y_3)(x_4-y_4)$ \\ \hline
			$-++-$ & $(x_2-y_3)(x_2-y_4)(x_3-y_3)(x_3-y_4)$ \\ \hline
			$-+-+$ & $-(x_2-y_3)(x_2-y_4)(x_4-y_3)(x_4-y_4)$ \\ \hline
			$--++$ & $(x_3-y_3)(x_3-y_4)(x_4-y_3)(x_4-y_4)$ \\ \hline
			$+11-$ & $(x_1-y_3)(x_1-y_4)(x_2+x_3-y_3-y_4)$ \\ \hline
			$11+-$ & $-(x_3-y_3)(x_3-y_4)(x_1+x_2-y_3-y_4)$ \\ \hline
			$+-11$ & $-(x_1-y_3)(x_1-y_4)(x_3+x_4-y_3-y_4)$ \\ \hline
			$11-+$ & $(x_4-y_3)(x_4-y_4)(x_1+x_2-y_3-y_4)$ \\ \hline
			$-+11$ & $(x_2-y_3)(x_2-y_4)(x_3+x_4-y_3-y_4)$ \\ \hline
			$-11+$ & $-(x_4-y_3)(x_4-y_4)(x_2+x_3-y_3-y_4)$ \\ \hline
			$1+1-$ & $x_1x_2+x_1x_3+x_2x_3-x_1y_3-x_2y_3-x_3y_3+y_3^2-x_1y_4-x_2y_4-x_3y_4+y_3y_4+y_4^2$ \\ \hline
			$+1-1$ & $(x_1-y_3)(x_1-y_4)$ \\ \hline
			$1122$ & $-(x_1+x_2-y_3-y_4)(x_3+x_4-y_3-y_4)$ \\ \hline
			$1-1+$ & $(x_4-y_3)(x_4-y_4)$ \\ \hline
			$-1+1$ & $x_2x_3+x_2x_4+x_3x_4-x_2y_3-x_3y_3-x_4y_3+y_3^2-x_2y_4-x_3y_4-x_4y_4+y_3y_4+y_4^2$ \\ \hline
			$1+-1$ & $x_1+x_2-y_3-y_4$ \\ \hline
			$1212$ & $x_1-x_4$ \\ \hline
			$1-+1$ & $-x_3-x_4+y_3+y_4$ \\ \hline
			$1221$ & $1$ \\
			\hline
		\end{tabular}
	}
\end{table}

\begin{table}[h]
	\caption{Formulas for $(SO(7,\C),S(O(4,\C) \times O(3,\C)))$}\label{tab:type-b-table}
	\begin{tabular}{|c|l|}
		\hline
		Symmetric $(4,3)$-clan $\gamma$ & Formula for $[Y_{\gamma}]$ \\ \hline
		$++---++$ & $x_1x_2(x_1-y_3)(x_1+y_3)(x_2-y_3)(x_2+y_3)$ \\ \hline
		$+-+-+-+$ & $-x_1x_3(x_1-y_3)(x_1+y_3)(x_3-y_3)(x_3+y_3)$ \\ \hline
		$-++-++-$ & $x_2x_3(x_2-y_3)(x_2+y_3)(x_3-y_3)(x_3+y_3)$ \\ \hline
		$+11-22+$ & $x_1(x_1-y_3)(x_1+y_3)(x_2^2+x_2x_3+x_3^2-y_3^2)$  \\ \hline		
		$11+-+22$ & $-x_3(x_3-y_3)(x_3+y_3)(x_1^2+x_1x_2+x_2^2-y_3^2)$ \\ \hline
		$+-1+1-+$ & $-x_1(x_1-y_3)(x_1+y_3)(x_3-y_3)(x_3+y_3)$ \\ \hline
		$-+1+1+-$ & $x_2(x_2-y_3)(x_2+y_3)(x_3-y_3)(x_3+y_3)$ \\ \hline
		$1+1-2+2$ & $x_1^2x_2^2+x_1^2x_2x_3+x_1x_2^2x_3+x_1^2x_3^2+x_1x_2x_3^2+x_2^2x_3^2-x_1^2y_3^2-x_2^2y_3^2-x_3^2y_3^2+y_3^4$ \\ \hline
		$+12-12+$ & $2x_1x_2(x_1-y_3)(x_1+y_3)$ \\ \hline		
		$+1-+-1+$ & $-x_1(x_1-y_3)(x_1+y_3)(x_2+x_3)$ \\ \hline
		$113+322$ & $-(x_3-y_3)(x_3+y_3)(x_1^2+x_1x_2+x_2^2-y_3^2)$ \\ \hline
		$-1+++1-$ & $(x_2-y_3)(x_2+y_3)(x_3-y_3)(x_3+y_3)$ \\ \hline
		$1+2-1+2$ & $2x_1x_2(x_1+x_2)$ \\ \hline
		$+12-21+$ & $x_1(x_1-y_3)(x_1+y_3)$ \\ \hline
		$1+-+-+1)$ & $x_1^2x_2+x_1x_2^2+x_1^2x_3+x_1x_2x_3+x_2^2x_3-x_3y_3^2$ \\ \hline
		$131+232$ & $x_1(x_1x_2+x_1x_3+x_2x_3+y_3^2)$ \\ \hline
		$1-+++-1$ & $-(x_1+x_2)(x_3-y_3)(x_3+y_3)$ \\ \hline
		$12+-+12$ & $2x_1(x_1+x_2+x_3)$ \\ \hline
		$1+2-2+1$ & $x_1^2+x_1x_2+x_2^2-y_3^2$ \\ \hline
		$132+132$ & $2x_1(x_1+x_2)$ \\ \hline
		$311+223$ & $x_1x_2+x_1x_3+x_2x_3+y_3^2$ \\ \hline
		$12+-+21$ & $x_1+x_2+x_3$ \\ \hline
		$123+312$ & $2x_1$ \\ \hline
		$312+123$ & $2(x_1+x_2)$ \\ \hline
		$123+321$ & $1$ \\
		\hline
	\end{tabular}
\end{table}

\begin{table}[h]
	\caption{Formulas for $(Sp(6,\C),Sp(4,\C) \times Sp(2,\C))$}\label{tab:type-c-table-1}
	\begin{tabular}{|l|l|}
		\hline
		Symmetric $(4,2)$-clan $\gamma$ & Formula for $[Y_{\gamma}]$ \\ \hline
		$++--++$ & $(x_1-y_3)(x_1+y_3)(x_2-y_3)(x_2+y_3)$ \\ \hline
		$+-++-+$ & $-(x_1-y_3)(x_1+y_3)(x_3-y_3)(x_3+y_3)$ \\ \hline
		$-++++-$ & $(x_2-y_3)(x_2+y_3)(x_3-y_3)(x_3+y_3)$ \\ \hline
		$+1122+$ & $(x_1-y_3)(x_1+y_3)(x_2+x_3)$ \\ \hline
		$11++22$ & $-(x_3-y_3)(x_3+y_3)(x_1+x_2)$ \\ \hline
		$1+12+2$ & $x_1x_2+x_1x_3+x_2x_3+y_3^2$ \\ \hline
		$+1212+$ & $(x_1-y_3)(x_1+y_3)$ \\ \hline
		$1+21+2$ & $x_1+x_2$ \\ \hline
		$12++12$ & $1$ \\
		\hline
	\end{tabular}
\end{table}

\begin{table}[h]
	\caption{Formulas for $(Sp(4,\C),GL(2,\C))$}\label{tab:type-c-table-2}
	\begin{tabular}{|l|l|}
		\hline
		Skew-symmetric $(2,2)$-clan $\gamma$ & Formula for $[Y_{\gamma}]$ \\ \hline
		$++--$ & $(x_1+x_2+y_1+y_2)(x_1x_2+y_1y_2)$ \\ \hline
		$+-+-$ & $-(x_1-x_2+y_1+y_2)(-x_1x_2+y_1y_2)$ \\ \hline
		$-+-+$ & $(-x_1+x_2+y_1+y_2)(-x_1x_2+y_1y_2)$ \\ \hline
		$--++$ & $-(-x_1-x_2+y_1+y_2)(x_1x_2+y_1y_2))$ \\ \hline
		$+11-$ & $(x_1+y_1)(x_1+y_2)$ \\ \hline
		$1122$ & $2(x_1x_2-y_1y_2)$ \\ \hline
		$-11+$ & $(x_1-y_1)(x_1-y_2)$ \\ \hline
		$1+-1$ & $x_1+x_2+y_1+y_2$ \\ \hline
		$1212$ & $2x_1$ \\ \hline
		$1-+1$ & $x_1+x_2-y_1-y_2$ \\ \hline
		$1221$ & $1$ \\
		\hline
	\end{tabular}
\end{table}

\begin{table}[h]
	\caption{Formulas for $(SO(6,\C),S(O(4,\C) \times O(2,\C)))$}\label{tab:type-d-table-1}
	\begin{tabular}{|l|l|}
		\hline
		Symmetric $(4,2)$-clan $\gamma$ & Formula for $[Y_{\gamma}]$ \\ \hline
		$++--++$ & $(x_1-y_3)(x_1+y_3)(x_2-y_3)(x_2+y_3)$ \\ \hline
		$+-++-+$ & $-(x_1-y_3)(x_1+y_3)(x_3-y_3)(x_3+y_3)$ \\ \hline
		$-++++-$ & $(x_2-y_3)(x_2+y_3)(x_3-y_3)(x_3+y_3)$ \\ \hline
		$+1122+$ & $(x_1-y_3)(x_1+y_3)(x_2+x_3)$ \\ \hline
		$+1212+$ & $(x_1-y_3)(x_1+y_3)(x_2-x_3)$ \\ \hline
		$11++22$ & $-(x_3-y_3)(x_3+y_3)(x_1+x_2)$ \\ \hline
		$1+12+2$ & $x_1x_2+x_1x_3+x_2x_3+y_3^2$ \\ \hline
		$+1221+$ & $(x_1-y_3)(x_1+y_3)$ \\ \hline
		$1+21+2$ & $x_1x_2-x_1x_3-x_2x_3+y_3^2$ \\ \hline
		$12++12$ & $2x_1$ \\ \hline
		$1+22+1$ & $x_1+x_2$ \\ \hline
		$12++21$ & $1$ \\ 
		\hline
	\end{tabular}
\end{table}

\begin{table}[h]
	\caption{Formulas for $(SO(6,\C),GL(3,\C))$}\label{tab:type-d-table-2}
	\resizebox{18cm}{3cm}{
		\begin{tabular}{|l|l|}
			\hline
			Skew-symmetric $(3,3)$-clan $\gamma$ & Formula for $[Y_{\gamma}]$ \\ \hline
			$+++---$ & $\frac{1}{4}\Delta_2(\xx,\yy,id)$ \\ \hline
			$--+-++$ & $-\frac{1}{4}\Delta_2(\xx,\yy,(\overline{1}\ \overline{2}\ 3))$ \\ \hline
			$-+-+-+$ & $\frac{1}{4}\Delta_2(\xx,\yy,(\overline{1}\ 2\ \overline{3}))$ \\ \hline
			$+--++-$ & $-\frac{1}{4}\Delta_2(\xx,\yy,(1\ \overline{2}\ \overline{3}))$ \\ \hline
			$+1212-$ & $\frac{1}{2}(x_1^2+x_2x_3+x_1y_1+x_1y_2+y_1y_2+x_1y_3+y_1y_3+y_2y_3)$ \\ \hline
			$-1122+$ & $\frac{1}{2}(x_1^2-x_2x_3-x_1y_1-x_1y_2+y_1y_2-x_1y_3+y_1y_3+y_2y_3)$ \\ \hline
			$11-+22$ & $\frac{1}{2}(x_1x_2-x_3^2+x_3y_1+x_3y_2-y_1y_2+x_3y_3-y_1y_3-y_2y_3)$ \\ \hline
			$1+21-2$ & $\frac{1}{2}(x_1+x_2-x_3+y_1+y_2+y_3)$ \\ \hline
			$1-12+2$ & $\frac{1}{2}(x_1+x_2+x_3-y_1-y_2-y_3)$ \\ \hline
			$12+-12$ & $1$ \\
			\hline
		\end{tabular}
	}
\end{table}

\begin{table}[h]
	\caption{Formulas for $(SO(6,\C),S(O(3,\C) \times O(3,\C)))$}\label{tab:type-d-table-3}
	\begin{tabular}{|l|l|}
		\hline
		Symmetric $(3,3)$-clan $\gamma$ & Formula for $[Y_{\gamma}]$ \\ \hline
		$+-11-+$ &  $x_1x_2(x_1-y_3)(x_1+y_3)$ \\ \hline
		$-+11+-$ & $-x_1x_2(x_2-y_3)(x_2+y_3)$ \\ \hline
		$112233$ & $x_1x_2(x_1+x_2)$ \\ \hline
		$+1--1+$ & $x_1(x_1-y_3)(x_1+y_3)$ \\ \hline
		$-1++1-$ & $-x_1(x_2^2+x_2x_3+x_3^2-y_3^2)$ \\ \hline
		$121323$ & $x_1(x_1+x_2+x_3)$ \\ \hline
		$123123$ & $x_1(x_1+x_2-x_3)$ \\ \hline
		$1+--+1$ & $x_1^2+x_1x_2+x_2^2-y_3^2$ \\ \hline
		$1-++-1$ & $x_1x_2-x_3^2+y_3^2$ \\ \hline
		$122331$ & $x_1+x_2+x_3$ \\ \hline
		$123312$ & $2x_1$ \\ \hline
		$123231$ & $x_1+x_2-x_3$ \\ \hline
		$123321$ & $1$ \\
		\hline
	\end{tabular}
\end{table}

\bibliographystyle{alpha}
\bibliography{../../sourceDatabase}

\end{document}